\documentclass[11pt,reqno,oneside]{amsart}

\usepackage{graphicx}
\usepackage{amssymb}
\usepackage{epstopdf}

\usepackage[a4paper, total={5.56in, 9in}]{geometry}

\usepackage{amsmath,amsfonts,amsthm,mathrsfs,amssymb,cite}
\usepackage[usenames]{color}

\usepackage{xcolor}

\newtheorem{thm}{Theorem}[section]

\newtheorem{cor}[thm]{Corollary}
\newtheorem{lem}{Lemma}[section]

\theoremstyle{definition}
\newtheorem{defn}{Definition}[section]

\theoremstyle{remark}

\newtheorem{rem}{Remark}[section]
\numberwithin{equation}{section}

\numberwithin{equation}{section}

\newcounter{saveeqn}


\newcommand{\pare}[1]{\left(#1\right)}

\newcommand{\abs}[1]{\left\lvert #1 \right\rvert}

\newcommand{\RR}{\mathbb{R}}

\newcommand{\Ss}{\mathbb{S}^2}
\newcommand{\CC}{\mathbb{C}}
\newcommand{\im}{\mathrm{i}}


\DeclareMathOperator{\curl}{curl}

\DeclareMathOperator{\cros}{\wedge}
\DeclareMathOperator{\supp}{supp}

\DeclareMathOperator{\Span}{span}

\newcommand{\Cor}{\mathcal{K}}
\newcommand{\claS}{\mathscr{A}}

\newcommand{\nor}{\nu}


\newcommand{\far}[1]{#1_{\infty}}

\newcommand{\EM}[1]{\mathbf{#1}}

\newcommand{\Ep}{\varepsilon}
\newcommand{\Mp}{\mu}
\newcommand{\Eci}{\gamma}




\title[Geometric properties in electromagnetism and applications]{On an electromagnetic problem in a corner and its applications }

\author{Emilia Bl{\aa}sten}
\address{Department of Mathematics and Statistics, University of Helsinki, Finland}
\email{emilia.blasten@helsinki.fi}

\author{Hongyu Liu}
\address{Department of Mathematics, Hong Kong Baptist University, Kowloon, Hong Kong SAR, China}
\email{hongyu.liuip@gmail.com, hongyuliu@hkbu.edu.hk}

\author{Jingni Xiao}
\address{Department of Mathematics, Rutgers University, Piscataway, NJ 08854, USA}
\email{jingni.xiao@rutgers.edu}

\date{} 
\begin{document}
	\maketitle
	
	\begin{abstract}
		
		Let $\Cor^{r_0}_{x_0}$ be a (non-degenerate) truncated corner in $\mathbb{R}^3$ with $x_0\in\mathbb{R}^3$ being its apex, and $\mathbf{F}_j\in C^\alpha(\overline{\Cor^{r_0}_{x_0}}; \mathbb{C}^3)$, $j=1,2$, where $\alpha$ is the positive H\"older index. Consider the following electromagnetic problem
		\[
		\left\{
		\begin{split}
		& \nabla\wedge  \EM{E}-\im\omega \Mp_0 \EM{H}=\EM{F}_{1} \quad \mbox{in $\Cor^{r_0}_{x_0}$},\\
		& \, \nabla\wedge \EM{H}+\im\omega\Ep_0 \EM{E}=\EM{F}_{2} \quad \mbox{in $\Cor^{r_0}_{x_0}$},\\
		&\, \nor\cros \EM{E}=\nor\cros\EM{H}=0 \hspace*{0.7cm}\mbox{on $\partial \Cor^{r_0}_{x_0}\setminus \partial B_{r_0}(x_0)$},
		\end{split}\right.
		\]
		where
		$\nu$ denotes the exterior unit normal vector of $\partial \Cor^{r_0}_{x_0}$.
		We prove that $\mathbf{F}_1$ and $\mathbf{F}_2$
		must vanish at the apex $x_0$. There are a series of interesting consequences of this vanishing property in
		several separate but intriguingly connected topics in electromagnetism. First, we can geometrically characterize non-radiating sources in 
		time-harmonic electromagnetic scattering. Secondly, we
		consider the inverse source scattering problem for time-harmonic electromagnetic waves
		and establish the uniqueness result in determining the polyhedral support of a source by a single far-field
		measurement. Thirdly, we derive a property of the geometric structure of electromagnetic interior transmission eigenfunctions near corners. Finally, we also discuss its implication to invisibility cloaking and inverse medium scattering.

		\medskip

		\noindent{\bf Keywords:}~~Maxwell system, corner singularity, invisible, vanishing, interior transmission eigenfunction, inverse scattering, single far-field measurement  
		
		\noindent{\bf 2010 Mathematics Subject Classification:}~~78A45, 35Q61, 35P25 (primary); 78A46, 35P25, 35R30 (secondary).
		
	\end{abstract}
	
	\section{Introduction}
	
	In this paper, we are mainly concerned with the time-harmonic Maxwell system. Let $\omega\in\mathbb{R}_+$ denote the frequency and let $\varepsilon_0, \mu_0\in\mathbb{R}_+$, respectively, signify the electric permittivity and magnetic permeability of a
	uniformly homogeneous space. Throughout, we let $\mathbf{E}$ and $\mathbf{H}$ denote, respectively,
	the electric and magnetic fields, which are $\mathbb{C}^3$-valued functions. 
	We first introduce the notion of the corner and the corresponding Maxwell system locally around the corner.
	
	Let $w_j\in\Ss:=\{x\in\mathbb{R}^3; |x|=1\}$, $j=1,\ldots, n$, be $n$ number of unit vectors with $n\geq 3$ such that they are triple-wise linearly independent. For a given point $x_0\in\mathbb{R}^3$, we define
	\begin{equation}\label{eq:CorN}
		\Cor=\Cor_{w_1,\ldots,w_n;x_0}:= \{x=x_0+\sum_{j=1}^{n} c_j w_j;\ c_j>0,j=1,\ldots,n\}\subset\mathbb{R}^3. 
	\end{equation}
	We assume that $\Cor$ is strictly convex: it is convex and must fit into a spherical cone of opening angle less than $\pi$. We may assume that none of the $w_j$'s are redundant. Then
	$\Cor$ is called a convex polyhedral cone with $n$ edges in $\mathbb{R}^3$. The point $x_0$ is the apex of the cone and $w_j$, $j=1,2,\ldots, n$, are the $n$ directions of the corresponding edges. Given a constant $r_0\in\mathbb{R}_+$, we define the truncated parallelepiped $\Cor^{r_0}=\Cor^{r_0}_{x_0}$ as
	\begin{equation}\label{eq:cone2}
		\Cor^{r_0}=\Cor_{x_0}^{r_0}=\Cor^{r_0}_{w_1,\ldots,w_n;x_0}:=\Cor_{w_1,\ldots,w_n;x_0}\cap B_{r_0}(x_0).
	\end{equation}
	Let $\mathbf{F}_1$ and $\mathbf{F}_2$ be two $\mathbb{C}^3$-valued functions such that $\mathbf{F}_j\in C^\alpha(\overline{\Cor^{r_0}})^3$, $j=1,2$, where the H\"older index $\alpha\in (0, 1)$. We start from the following Maxwell system in the corner $\mathcal{K}_{x_0}^{r_0}$, 
	\begin{equation}\label{eq:MaxwellCor}
		\left\{
		\begin{split}
			& \nabla\wedge  \EM{E}-\im\omega \Mp_0 \EM{H}=\EM{F}_{1} \quad \mbox{in $\Cor_{x_0}^{r_0}$},\\
			& \, \nabla\wedge \EM{H}+\im\omega\Ep_0 \EM{E}=\EM{F}_{2} \quad \mbox{in $\Cor_{x_0}^{r_0}$},\\
			&\, \nor\cros \EM{E}=\nor\cros\EM{H}=0 \hspace*{0.8cm}\mbox{on $\partial \Cor_{x_0}^{r_0}\setminus \partial B_{r_0}(x_0)$}, 
		\end{split}\right.
	\end{equation}
	where $\im:=\sqrt{-1}$ denotes the imaginary unit and $\nu\in\mathbb{S}^2$ is the exterior unit normal vector to $\partial \Cor_{x_0}^{r_0}\setminus \partial B_{r_0}(x_0)$.
	We emphasize that the homogeneous boundary conditions in \eqref{eq:MaxwellCor} are imposed only on the faces of the corner $\mathcal{K}_{x_0}^{r_0}$ around the apex $x_0$. 
	
	It can be seen from later discussion that, the problem \eqref{eq:MaxwellCor} is the key for several separate topics in
	time-harmonic electromagnetic scattering, including the geometric characterization of non-radiating sources and the uniqueness in determining the support of radiating sources. 
	Moreover, mathematical models for invisible medium scatterers, geometric structures of interior transmission eigenfunctions, as well as the unique determination of the support of inhomogeneous medium scatterers, can be also reduced into the local problem \eqref{eq:MaxwellCor}.
	These topics, in particular those associated with medium scattering, have received considerable interest in the literature recently and have been extensively investigated from different perspectives \cite{Bsource,BLY,BL2016,BL2017,BL2017b,BL2018,BPS,BLLW,BV,CaX,DCL,ElH15,ElH,HSV,LLL17,PSV}, but mainly for the Helmholtz system governing the acoustic scattering. In this paper, we aim to extend some of the earlier results 
	for the
	acoustic (medium) scattering to the electromagnetic medium as well as  source scattering.

The key result concerning the problem \eqref{eq:MaxwellCor}, which will be substantially used later to obtain results for the aforementioned related topics, is stated as follows. 
\begin{thm}\label{thm:main1}
	For any given $\mathbf{F}_j\in C^\alpha(\overline{\Cor^{r_0}})^3$, $j=1,2$, with $\alpha\in (0, 1)$, we consider the Maxwell problem \eqref{eq:MaxwellCor}. Suppose there exists a pair of solutions $\pare{\EM{E},\EM{H}}\in H(\mathrm{curl}, \Cor^{r_0})\times H(\mathrm{curl}, \Cor^{r_0})$ to \eqref{eq:MaxwellCor}. 
	Then there must hold that 
	\begin{equation}\label{eq:vanishing1}
	\mathbf{F}_1(x_0)=\mathbf{F}_2(x_0)=0. 
	\end{equation}
\end{thm}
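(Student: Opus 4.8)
The plan is to test the interior equations against a family of highly oscillating, exponentially decaying solutions of the \emph{homogeneous} Maxwell system in the cone, and to read off the values of the sources at the apex from the leading order of the resulting asymptotics.

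First I would derive an integral identity that isolates the sources. Let $(\EM{P},\EM{Q})$ be any smooth pair solving $\grad\cros\EM{P}=\im\omega\Mp_0\EM{Q}$ and $\grad\cros\EM{Q}=-\im\omega\Ep_0\EM{P}$ in $\Cor^{r_0}$. Pairing the first equation in \eqref{eq:MaxwellCor} with $\EM{Q}$, the second with $\EM{P}$, integrating over $\Cor^{r_0}$ and applying the vector Green's formula $\int_{\Cor^{r_0}}\pare{(\grad\cros\EM{E})\cdot\EM{Q}-\EM{E}\cdot(\grad\cros\EM{Q})}\,dx=\int_{\partial\Cor^{r_0}}(\nor\cros\EM{E})\cdot\EM{Q}\,dS$ (valid for $\EM{E}\in\Hcurl$ with the pairing understood in duality on $\partial\Cor^{r_0}$), the two volume integrals carrying $\EM{E}$ and $\EM{H}$ cancel upon adding. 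Since $\nor\cros\EM{E}=\nor\cros\EM{H}=0$ on the faces, only the spherical cap $\Gamma:=\partial\B_{r_0}(x_0)\cap\Cor$ survives, leaving
\[
\int_{\Gamma}\pare{(\nor\cros\EM{E})\cdot\EM{Q}+(\nor\cros\EM{H})\cdot\EM{P}}\,dS=\int_{\Cor^{r_0}}\pare{\EM{F}_1\cdot\EM{Q}+\EM{F}_2\cdot\EM{P}}\,dx.
\]
Next I would construct the test pair explicitly. Writing $k^2=\omega^2\Ep_0\Mp_0$, fix a real unit vector $d$ with $d\cdot w_j<0$ for all $j$ (such $d$ exists because $\Cor$ is strictly convex), a real unit vector $e\perp d$, and take $\rho=\rho(\tau)\in\CC^3$ with $\mathrm{Re}\,\rho=\beta d$ and $\rho\cdot\rho=-k^2/\tau^2$, so that $\rho\to\rho_0:=\beta(d+\im e)$ as $\tau\to\infty$. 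Choosing a constant polarization $\EM{a}$ with $\rho\cdot\EM{a}=0$ and using $\rho\cros(\rho\cros\EM{a})=(k^2/\tau^2)\EM{a}$, one checks that $\EM{P}=\EM{a}\,e^{\tau\rho\cdot(x-x_0)}$ and $\EM{Q}=\frac{\tau}{\im\omega\Mp_0}(\rho\cros\EM{a})\,e^{\tau\rho\cdot(x-x_0)}$ solve the homogeneous system exactly. On $\Cor$ one has $\mathrm{Re}\,\pare{\rho\cdot(x-x_0)}=\beta\,d\cdot(x-x_0)\le-\beta c_0\abs{x-x_0}$ for some $c_0>0$, so $\EM{P},\EM{Q}$ decay exponentially away from $x_0$. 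Hence the cap integral is $O(\tau e^{-\beta c_0 r_0\tau})$ and vanishes in the limit, while a Laplace-type expansion gives, for Hölder $g$, $\int_{\Cor^{r_0}}g\,e^{\tau\rho\cdot(x-x_0)}\,dx=g(x_0)\tau^{-3}I(\rho)+O(\tau^{-3-\alpha})$, where $I(\rho):=\int_{\Cor-x_0}e^{\rho\cdot y}\,dy\neq0$.

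Multiplying the identity by $\tau^2$ and letting $\tau\to\infty$, the dominant contribution comes from $\EM{F}_1\cdot\EM{Q}$ (which carries the extra factor $\tau$), giving $\EM{F}_1(x_0)\cdot(\rho_0\cros\EM{a})=0$. A direct computation shows that $\rho_0\cros\EM{a}$ is a nonzero multiple of $\rho_0$ for every $\EM{a}\perp\rho_0$ not parallel to $\rho_0$, so $\EM{F}_1(x_0)\cdot(d+\im e)=0$. Varying $e$ over the unit circle orthogonal to $d$ and using $\pm e$ then forces $\EM{F}_1(x_0)\cdot d=\EM{F}_1(x_0)\cdot e=0$ for all such $e$, whence $\EM{F}_1(x_0)=0$. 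Running the same argument with the dual test pair $\EM{Q}=\EM{a}\,e^{\tau\rho\cdot(x-x_0)}$, $\EM{P}=\frac{\im\tau}{\omega\Ep_0}(\rho\cros\EM{a})\,e^{\tau\rho\cdot(x-x_0)}$, for which $\EM{F}_2\cdot\EM{P}$ is dominant, yields $\EM{F}_2(x_0)=0$, completing the proof.

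The main obstacle I anticipate is the rigorous treatment of the boundary term for merely $\Hcurl$ fields: one must localize the $H^{-1/2}(\mathrm{Div})$–$H^{-1/2}(\mathrm{Curl})$ duality pairing on $\partial\Cor^{r_0}$ so that the contribution of the faces is exactly zero (from the boundary conditions), while that of the cap—where $\EM{P},\EM{Q}$ are large near $x_0$ but exponentially small on $\Gamma$—is controlled. Secondary technical points are verifying that the constructed fields are genuine (not merely approximate) decaying solutions of the homogeneous Maxwell system in the cone, and the nonvanishing of the cone integral $I(\rho_0)$, both of which are handled by the explicit construction above together with a standard computation for strictly convex cones.
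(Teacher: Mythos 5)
Your overall strategy is the same as the paper's: pair the system against exponentially decaying plane-wave solutions $e^{\rho\cdot x}$ of the homogeneous Maxwell system (the paper's Lemma~\ref{lem:Integral} is exactly your integral identity, and its test pair $\EM{V}=pe^{\rho\cdot x}$, $\EM{W}=\frac{1}{\im\omega\Mp_0}\rho\wedge p\,e^{\rho\cdot x}$ is your $(\EM{P},\EM{Q})$ up to normalization), then run Laplace-type asymptotics at the apex. Your polarization observation --- that for isotropic $\rho_0$ one has $\rho_0\wedge\EM{a}\in\CC\rho_0$ for every $\EM{a}\perp\rho_0$, so the two sources decouple by their order in $\tau$ --- is a clean reorganization; the paper instead takes $p$ essentially parallel to $\rho$ (possible because $\rho\cdot\rho=-k^2=o(|\rho|^2)$), so the companion field is $O(\tau^{-1})$ and only one source survives at leading order, and it argues by contradiction rather than by extracting linear conditions.

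The genuine gap is the non-vanishing of $I(\rho_0)=\int_{\Cor-x_0}e^{\rho_0\cdot y}\,dy$, which you dismiss as ``a standard computation for strictly convex cones.'' For a simplicial cone ($n=3$) it is indeed standard, via the closed formula $\int_{\Cor_0}e^{\rho\cdot x}dx=\abs{w_1\wedge w_2\wedge w_3}/\prod_j(-\rho\cdot w_j)$, whose denominator has factors with positive real part. But the theorem allows cones with $n\ge4$ edges, and then $I(\rho_0)$ is a signed combination of such simplicial terms that can in principle cancel for particular $(d,e)$. Establishing the uniform lower bound $\abs{\int_{\Cor}e^{\rho\cdot x}dx}\ge C_\Cor\tau^{-3}$ is the technical core of the paper (Lemma~\ref{lem:Nonvanish}): one must take $d=d_s$ in a specific one-parameter family adapted to the cone, with $d_s$ nearly orthogonal to one edge so that a single simplicial contribution blows up like $1/s$ and dominates the remainder, and the admissible $d^{\perp}$ is then essentially fixed as $w_1\wedge z$. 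Consequently your final step --- ``varying $e$ over the unit circle orthogonal to $d$'' to span $\CC^3$ --- is not available as stated: you only control $I$ on the special family $(d_s,d^{\perp})$, $s\in(0,s_0)$. The repair is exactly the paper's analyticity argument: the conditions $\EM{F}_1(x_0)\cdot(d_s+\im d^{\perp})=0$ are real-analytic in $s$ and the directions $d_s+\im d^{\perp}$ span $\CC^3$ as $s$ varies (evaluate at $s=0$, $s=1$, $s\to\infty$), which is how the paper rules out the vanishing of $\lim_{\tau\to\infty}\EM{F}_0\cdot p_s$. As written, your proposal leaves the hardest estimate unproved and relies on a freedom of direction that the estimate does not provide.
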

\noindent We would like to emphasize that Theorem~\ref{thm:main1} and the problem \eqref{eq:MaxwellCor} concerns only the Maxwell equations locally in the truncated polyhedral cone $\Cor_{x_0}^{r_0}$. This allows for great generality for various applications in source or medium scattering as mentioned before. Roughly speaking, only local conditions of coefficients or fields around the corner need to be assumed, in order for results on scattering or inverse scattering  in latter sections to be valid.  Moreover, we allow corners with three or more than three number of edges.
	
	There are several technical developments in the current article. 
	First, we provide a unified framework for several different topics in both medium and source scattering, via the local problem \eqref{eq:MaxwellCor} and Theorem~\ref{thm:main1}. It is in sharp difference with the existing studies 
	in the aforementioned literature, which usually deal with those topics individually and focus mainly on medium problems for acoustic scattering.
	This unification reveals certain interesting 
	connections between those topics arising from different applications.  
	Second, in order to deal with the Maxwell system and derive the desired result, we need to develop new techniques to deal with the case of systems. In particular, some estimates and nonvanishing properties, which might be more natural for scalar field when dealing with the Helmholtz equation, become nontrivial in our case if following the same arguments as before. 
	Moreover, Theorem~\ref{thm:main1} and its proof  
	provide a new, clearer and more elegant treatment 
	for electromagnetic scattering. This gives more technical insights and also paves the way for further developments.		
	
	
	The rest of the paper is organized as follows. In Section 2, we give the proof of Theorem~\ref{thm:main1}. In Section 3, we consider the electromagnetic scattering from active sources. We establish a geometric characterization
	of non-radiating sources; see, Theorem~\ref{thm:main2} and its corollaries. The unique recovery results of the supports of radiating sources areis included in Theorems~\ref{thm:main3} and \ref{thm:main4}. In Section 4, we consider the electromagnetic scattering from medium scatterers due to incident waves. We derive a geometric structure of electromagnetic interior transmission eigenfunctions in Theorem~\ref{thm:main5}. An implication to invisibility cloaking is given in Theorem~\ref{thm:main6} and that to unique determination of the supports of electromagnetic media isum in Theorem~\ref{thm:main7}.

	\section{Proof of Theorem~\ref{thm:main1}}
	
	In this section, we give the proof of Theorem~\ref{thm:main1}. To that end, we first present two auxiliary lemmas. 
	
	\begin{lem}\label{lem:Integral}
		Let $\Omega$ be a bounded Lipschitz domain in $\RR^3$ and $\EM{J}_j\in L^2(\Omega; \mathbb{C}^3)$.
		Suppose that $\pare{\EM{E},\EM{H}}\in H(\mathrm{curl},\Omega)\times H(\mathrm{curl},\Omega)$ is a solution to the Maxwell system 
		\begin{equation}\label{eq:MaxwellO}
			\nabla\wedge  \EM{E}-\im\omega \Mp_0 \EM{H}=\EM{J}_1,\quad \nabla\wedge \EM{H}+\im\omega\Ep_0 \EM{E}=\EM{J}_2\qquad \mbox{in $\Omega$}.
		\end{equation}
		Then one has
		\begin{equation}\label{eq:IntId1}
			\int_{\Omega} \EM{J}_1\cdot\EM{W}
			+\int_{\Omega} \EM{J}_2\cdot\EM{V}
			=\int_{\partial \Omega} \EM{W}\cdot\pare{\nor\cros\EM{E}}
			+\int_{\partial \Omega} \EM{V}\cdot\pare{\nor\cros\EM{H}},
		\end{equation}
		and
		\begin{equation}\label{eq:IntId2}
			\Ep_0\int_{\Omega} \EM{J}_1\cdot\EM{V}
			-\Mp_0\int_{\Omega} \EM{J}_2\cdot\EM{W}
			=\Ep_0\int_{\partial \Omega} \EM{V}\cdot\pare{\nor\cros\EM{E}}
			-\Mp_0\int_{\partial \Omega} \EM{W}\cdot\pare{\nor\cros\EM{H}},
		\end{equation}
		for any $\pare{\EM{V},\EM{W}}\in H(\mathrm{curl},\Omega)\times H(\mathrm{curl},\Omega)$ satisfying 
		\begin{equation}\label{eq:MaxwellTest}
			\nabla\wedge  \EM{V}-\im\omega \Mp_0 \EM{W}=0,\quad \nabla\wedge \EM{W}+\im\omega\Ep_0 \EM{V}=0\qquad \mbox{in $\Omega$}.
		\end{equation}
	\end{lem}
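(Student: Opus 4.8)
The plan is to derive both identities from the single vector Green's formula for the curl operator,
\begin{equation*}
\int_\Omega (\nabla\wedge \EM{A})\cdot \EM{B}-\int_\Omega \EM{A}\cdot(\nabla\wedge\EM{B})=\int_{\partial\Omega}(\nor\cros\EM{A})\cdot\EM{B},
\end{equation*}
valid for $\EM{A},\EM{B}\in H(\mathrm{curl},\Omega)$ on a bounded Lipschitz domain, with the boundary term read as the duality pairing between the tangential trace spaces $H^{-1/2}(\mathrm{div}_{\partial\Omega})$ and $H^{-1/2}(\mathrm{curl}_{\partial\Omega})$. Here all pairings are bilinear (not Hermitian), so that $(\nor\cros\EM{A})\cdot\EM{B}=\EM{B}\cdot(\nor\cros\EM{A})$, which is what lets the boundary terms line up with the right-hand sides of \eqref{eq:IntId1} and \eqref{eq:IntId2}.

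For the first identity I would substitute the sources from \eqref{eq:MaxwellO}, writing $\int_\Omega \EM{J}_1\cdot\EM{W}=\int_\Omega(\nabla\wedge\EM{E})\cdot\EM{W}-\im\omega\Mp_0\int_\Omega\EM{H}\cdot\EM{W}$ and $\int_\Omega \EM{J}_2\cdot\EM{V}=\int_\Omega(\nabla\wedge\EM{H})\cdot\EM{V}+\im\omega\Ep_0\int_\Omega\EM{E}\cdot\EM{V}$. Applying Green's formula to transfer the curl off $\EM{E}$ and $\EM{H}$, and then invoking the test equations \eqref{eq:MaxwellTest} in the form $\nabla\wedge\EM{V}=\im\omega\Mp_0\EM{W}$ and $\nabla\wedge\EM{W}=-\im\omega\Ep_0\EM{V}$, the curl-on-$\EM{E}$ term produces $-\im\omega\Ep_0\int_\Omega\EM{E}\cdot\EM{V}$ plus a boundary term, and the curl-on-$\EM{H}$ term produces $\im\omega\Mp_0\int_\Omega\EM{H}\cdot\EM{W}$ plus a boundary term. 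The decisive point is that, upon adding the two equations, the four volume integrals cancel in pairs (the $\EM{E}\cdot\EM{V}$ terms against each other and the $\EM{H}\cdot\EM{W}$ terms against each other), leaving exactly the two boundary integrals of \eqref{eq:IntId1}.

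The second identity follows by the same mechanism with the cross-pairing: I would pair $\EM{J}_1$ with $\EM{V}$ weighted by $\Ep_0$, pair $\EM{J}_2$ with $\EM{W}$ weighted by $\Mp_0$, and subtract. After the identical integration by parts and substitution of \eqref{eq:MaxwellTest}, the surviving volume integrals $\int_\Omega\EM{E}\cdot\EM{W}$ and $\int_\Omega\EM{H}\cdot\EM{V}$ carry coefficients of the form $\im\omega(\Ep_0\Mp_0-\Mp_0\Ep_0)=0$ and therefore annihilate, leaving $\Ep_0\int_{\partial\Omega}(\nor\cros\EM{E})\cdot\EM{V}-\Mp_0\int_{\partial\Omega}(\nor\cros\EM{H})\cdot\EM{W}$, which is \eqref{eq:IntId2}.

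The algebra itself is routine once the two pairings are chosen correctly; the only genuine technical point is justifying Green's formula for $H(\mathrm{curl})$ fields on a merely Lipschitz domain, where the boundary integrals are not classical surface integrals but dualities of the tangential trace spaces above. This is standard, and since all four fields $\EM{E},\EM{H},\EM{V},\EM{W}$ belong to $H(\mathrm{curl},\Omega)$ the pairings are well defined. Thus I expect the main obstacle to be bookkeeping of the trace pairings rather than the computation; the cancellation of all volume contributions, which converts the lemma into a clean source-to-boundary relation, is the structural heart of the argument and is exactly what makes it useful in the proof of Theorem~\ref{thm:main1}.
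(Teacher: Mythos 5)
Your proposal is correct and follows essentially the same route as the paper: test the Maxwell system against $\EM{V}$ and $\EM{W}$, integrate by parts via the $H(\mathrm{curl})$ Green's formula, substitute \eqref{eq:MaxwellTest} for $\nabla\wedge\EM{V}$ and $\nabla\wedge\EM{W}$, and observe that the volume terms cancel in the two combinations \eqref{eq:IntId1} and \eqref{eq:IntId2}. The only cosmetic difference is that the paper records all four pairings separately before combining them, while you compute the two needed combinations directly; your explicit remark on reading the boundary terms as tangential-trace duality pairings is a point the paper leaves implicit.
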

	\begin{proof}
		Applying $\EM{V}\in H(\mathrm{curl},\Omega)$ as a test function of the Maxwell system \eqref{eq:MaxwellO} and integrating by parts yields
		\begin{equation}\label{eq:a1}
			\begin{split}
				\int_{\Omega} \EM{J}_1\cdot\EM{V}
				&=\int_{\Omega} \pare{\nabla\wedge  \EM{E}-\im\omega \Mp_0 \EM{H}}\cdot \EM{V}
				\\&=\int_{\partial \Omega} \EM{V}\cdot\pare{\nor\cros\EM{E}}
				+\int_{\Omega}\im \omega \pare{\Mp_0 \EM{W}\cdot\EM{E}-\Mp_0\EM{H}\cdot\EM{V}}, 
			\end{split}	
		\end{equation}
		and
		\begin{equation}\label{eq:a4}
			\begin{split}
				\int_{\Omega} \EM{J}_2\cdot\EM{V}
				&=\int_{\Omega} \pare{\nabla\wedge \EM{H}+\im\omega\Ep_0 \EM{E}}\cdot \EM{V}
				\\&=\int_{\partial \Omega} \EM{V}\cdot\pare{\nor\cros\EM{H}}
				+\int_{\Omega}\im \omega \pare{ \Mp_0\EM{W}\cdot\EM{H}+ \Ep_0\EM{E}\cdot\EM{V}}, 
			\end{split}	
		\end{equation}
		where we have used the property that $\pare{\EM{V},\EM{W}}$ satisfies the Maxwell system \eqref{eq:MaxwellTest}.
		In a similar way, one can obtain
		\begin{equation}\label{eq:a3}
			\begin{split}
				\int_{\Omega} \EM{J}_1\cdot\EM{W}
				&=\int_{\partial \Omega} \EM{W}\cdot\pare{\nor\cros\EM{E}}
				-\int_{\Omega}\im \omega \pare{ \Ep_0\EM{V}\cdot\EM{E}+ \Mp_0\EM{H}\cdot\EM{W}}, 
			\end{split}	
		\end{equation}
		and
		\begin{equation}\label{eq:a2}
			\int_{\Omega} \EM{J}_2\cdot\EM{W}
			=\int_{\partial \Omega} \EM{W}\cdot\pare{\nor\cros\EM{H}}
			-\int_{\Omega}\im \omega  \pare{\Ep_0\EM{V}\cdot\EM{H}-\Ep_0\EM{E}\cdot\EM{W}}.
		\end{equation}
		Now, the identities \eqref{eq:IntId1} and \eqref{eq:IntId2} can be straightforwardly verified by using \eqref{eq:a1}--\eqref{eq:a2}. 
	\end{proof}
	
	\begin{lem}\label{lem:Nonvanish}
		Let $\Cor=\Cor_{w_1,\ldots,w_n;0}$ be a convex polyhedral cone with $n$ number of edges, $n\ge 3$. Given a constant $k$, a nontrivial constant complex vector $\EM{F}_0$, 
		there exist positive constants $c_{\Cor}$ and $C_{\Cor}$, and vectors $d,d^{\perp}\in\Ss$ with $d^\perp\perp d$, which
		satisfy that
		\begin{equation}\label{eq:innerdot}
			d \cdot \theta < - c_{\Cor} \quad \mbox{for any $\theta \in \Cor \cap \mathbb S^2$},
		\end{equation}
		and that
		\begin{equation}\label{eq:NonvanishIntegral}
			\abs{\int_{\Cor} e^{\rho\cdot x} dx} \geq C_{\Cor} \tau^{-3},\quad\mbox{ for any $\tau\geq k$},
		\end{equation}
		where the complex vector $\rho$ is given by
		\begin{equation}
			\rho := \tau d + \im \sqrt{\tau^2+k^2} d^\perp.
		\end{equation} 
		Moreover, the vectors $d$ and $d^\perp$ can be chosen in such a way that, denoting
		\begin{equation}\label{eq:p}
			p:=d^\perp- \im \sqrt{1+k^2/\tau^2} d,
		\end{equation}
		one has
		\begin{equation}\label{eq:relationVects}
			p\cdot\rho=0,\qquad \rho\cros p=-k^2 \pare{d\cros d^{\perp}}/\tau,
		\end{equation}
		and the limit below exists and satisfies
		\begin{equation}\label{eq:nontrivialFactor}
			\lim_{\tau\to\infty}\EM{F}_0\cdot p\neq 0.
		\end{equation}
	\end{lem}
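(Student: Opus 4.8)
The plan is to separate the purely geometric content from the oscillatory-integral estimate, and then to read off the algebraic relations by direct computation. Because $\Cor$ is strictly convex, the compact set $\overline{\Cor}\cap\Ss$ lies in an open half-space through the origin, so the set of $d\in\Ss$ with $\sup_{\theta\in\overline\Cor\cap\Ss} d\cdot\theta<0$ is open and nonempty; choosing $d$ there and using compactness yields $c_\Cor>0$ with $d\cdot\theta\le -c_\Cor$ on $\overline\Cor\cap\Ss$, which is \eqref{eq:innerdot}. This same inequality gives $\Re(\rho\cdot x)=\tau\, d\cdot x<0$ on $\Cor\setminus\{0\}$, so the integral in \eqref{eq:NonvanishIntegral} converges. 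I would then exploit homogeneity: writing $\rho=\tau(d+\im s\,d^\perp)$ with $s:=\sqrt{1+k^2/\tau^2}$ and substituting $x=y/\tau$ (admissible since $\Cor$ is a cone) factors out exactly $\tau^{-3}$,
\[
\int_{\Cor} e^{\rho\cdot x}\,dx=\tau^{-3}J(s),\qquad J(s):=\int_{\Cor} e^{(d+\im s\,d^\perp)\cdot y}\,dy .
\]
For $\tau\ge k>0$ one has $s\in(1,\sqrt 2\,]$, so \eqref{eq:NonvanishIntegral} is equivalent to a $\tau$-free lower bound $\inf_{s\in[1,\sqrt2]}\abs{J(s)}\ge C_\Cor>0$.

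Since $\abs{e^{(d+\im s d^\perp)\cdot y}}=e^{d\cdot y}\in L^1(\Cor)$ uniformly in $s$, dominated convergence makes $s\mapsto J(s)$ continuous on the compact interval $[1,\sqrt2]$; hence it suffices to prove the pointwise nonvanishing $J(s)\neq 0$. To analyze $J(s)$ I would reduce the dimension: complete $\{d,d^\perp\}$ to an orthonormal frame $\{d,d^\perp,e\}$, integrate out the free variable $y\cdot e$ to get the chord length of $\Cor$ along $e$, and pass to polar coordinates in the planar shadow of $\Cor$ on the $(d,d^\perp)$-plane, which by strict convexity is a sector of angular width less than $\pi$. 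The elementary radial integral $\int_0^\infty r^2 e^{rw}\,dr=2(-w)^{-3}$ (valid since $\Re w<0$) then collapses $J$ to a single scalar integral
\[
J(s)=2\int_{\psi_1}^{\psi_2}\frac{\lambda(\psi)}{(\cos\psi+\im s\sin\psi)^3}\,d\psi,\qquad (\psi_1,\psi_2)\subset(-\tfrac\pi2,\tfrac\pi2),
\]
where $\lambda>0$ is the degree-one homogeneous chord function.

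The main obstacle is precisely the nonvanishing of this oscillatory integral: the phase may wind over a sector of width approaching $\pi$, so cancellation is a priori possible and no crude modulus bound closes the argument. I would resolve it using the freedom remaining in the frame. When $n=3$ the cone is simplicial and $J(s)=\abs{\det(w_1,w_2,w_3)}\big/\prod_{i}\bigl(-(d+\im s\,d^\perp)\cdot w_i\bigr)$ is manifestly nonzero, giving the bound at once. For $n\ge 3$ the integrand is real-analytic in the frame and in $s$ and is not identically zero (as the imaginary part is scaled to zero, $J$ tends to $\int_\Cor e^{d\cdot y}\,dy>0$), so for a generic admissible $d^\perp$—a perturbation preserving \eqref{eq:innerdot}—the function $s\mapsto J(s)$ has no zero on $[1,\sqrt2]$, and continuity plus compactness furnish $C_\Cor>0$; establishing this genericity carefully is the delicate point. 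The remaining claims are direct: expanding with $d\cdot d^\perp=0$, $\abs{d}=\abs{d^\perp}=1$ and $\tau\sqrt{1+k^2/\tau^2}=\sqrt{\tau^2+k^2}$ gives $p\cdot\rho=0$ and $\rho\cros p=-k^2(d\cros d^\perp)/\tau$, which is \eqref{eq:relationVects}; and since $p\to d^\perp-\im d=-\im(d+\im d^\perp)$ as $\tau\to\infty$, the limit in \eqref{eq:nontrivialFactor} equals $-\im\,\EM{F}_0\cdot(d+\im d^\perp)$, a single complex-linear condition on the frame that, because $\EM{F}_0\neq0$, holds for generic $d^\perp$ and is therefore compatible with the choice made for the nonvanishing of $J$.
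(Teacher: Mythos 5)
Your treatment of the easy components is correct: the choice of $d$ via strict convexity giving \eqref{eq:innerdot}, the algebraic verification of \eqref{eq:relationVects}, the scaling substitution $x=y/\tau$ reducing \eqref{eq:NonvanishIntegral} to a lower bound for $J(s)=\int_{\Cor}e^{(d+\im s\,d^{\perp})\cdot y}\,dy$ uniformly over $s\in(1,\sqrt{2}\,]$, and the computation of the limit in \eqref{eq:nontrivialFactor}. The $n=3$ case is also genuinely closed by the explicit formula $J(s)=\abs{w_1\wedge w_2\wedge w_3}/\prod_j\bigl(-(d+\im s\,d^{\perp})\cdot w_j\bigr)$, whose denominator is bounded above in modulus, so a uniform positive lower bound follows. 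The gap is exactly where you flag it, and it is not a removable technicality: for $n\ge 4$ you assert that for a ``generic admissible $d^{\perp}$'' the analytic function $s\mapsto J(s)$ has no zero on the entire interval, deducing this from real-analyticity in the frame and in $s$ together with $J(0)>0$. That inference does not follow. Non-identical vanishing of a real-analytic $\mathbb{C}$-valued function of several variables only makes its zero set thin in the joint parameter space; it does not control the \emph{projection} of that zero set onto the frame parameters, and you need to exclude zeros for a whole continuum of $s$ simultaneously (every $\tau\ge k$ corresponds to some $s$). For a fixed frame, analyticity in $s$ gives only that zeros of $J$ are isolated, which is compatible with $\inf_{s}\abs{J(s)}=0$ on the compact interval. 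No quantitative lower bound $C_{\Cor}$ is produced, and the claimed compatibility with the genericity needed for \eqref{eq:nontrivialFactor} also rests on this unproved step.

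The paper closes precisely this point by a quantitative, non-generic construction rather than a genericity argument. It picks $z\in\Ss$ with $z\cdot w_1=0$ and $z\cdot w_j<-\kappa$ for $j\ge 2$, sets $d=d_s=(z-sw_1)/\abs{z-sw_1}$ with a \emph{small} auxiliary parameter $s>0$ (unrelated to your $s=\sqrt{1+k^2/\tau^2}$) and $d^{\perp}=w_1\wedge z$, and splits $\Cor$ into the simplicial cone $\Cor_0$ spanned by $w_1,w_2,w_3$ and its complement. On $\Cor_0$ the closed-form integral gives a lower bound of order $\tau^{-3}/s$ because $\abs{d_s\cdot w_1}\sim s$ is small, while the integral over $\Cor\setminus\Cor_0$ is bounded by $16\pi\kappa^{-3}\tau^{-3}$ \emph{independently of $s$}; taking $s$ small enough makes the main term dominate, yielding \eqref{eq:NonvanishIntegral} with an explicit $C_{\Cor}$. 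Only the final step \eqref{eq:nontrivialFactor} uses a one-parameter analyticity argument in $s$, which (unlike your multi-parameter genericity claim) is elementary to justify. If you want to salvage your route, you would need to replace the genericity appeal by an argument of this dominant-term type, or otherwise prove a uniform lower bound for $J$ on the whole $s$-interval.
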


	We postpone the proof of Lemma~\ref{lem:Nonvanish} to the end of this section and first present the proof of Theorem~\ref{thm:main1}. 
	
	\begin{proof}[Proof of Theorem~\ref{thm:main1}]
		We assume without loss of generality that $x_0=0$, which can be achieved by a rigid change of coordinates.	
		Recall from Lemma~\ref{lem:Integral} and the boundary condition in \eqref{eq:MaxwellCor}
		that, for any $\pare{\EM{V},\EM{W}}$ satisfying \eqref{eq:MaxwellTest}, there holds
		\begin{equation}\label{eq:IntIdProof}
			\int_{\Cor^{r_0}} \EM{F}_1\cdot\EM{W}
			+\int_{\Cor^{r_0}} \EM{F}_2\cdot\EM{V}
			=\int_{\partial \Cor^{r_0}\cap \partial B_{r_0}}
			\EM{W}\cdot\pare{\nor\cros\EM{E}}
			+\int_{\partial \Cor^{r_0}\cap \partial B_{r_0}} \EM{V}\cdot\pare{\nor\cros\EM{H}}.
		\end{equation}	
		We first prove $\EM{F}_2(0)=0$ by contradiction.
		
		Since $\mathbf{F}_j\in C^\alpha(\overline{\Cor^{r_0}})^3$, we can write
		\begin{equation*}
			\EM{F}_2=\EM{F}_0+\tilde{\EM{F}}
		\end{equation*}
		with $\EM{F}_0$ a constant vector, and $\tilde{\EM{F}}$ a vector field satisfying 
		\[
		|\tilde{\EM{F}}(x)|\le C|x|^{\alpha},\quad x\in \Cor^{r_0}.
		\]
		Under this splitting of $\EM{F}_2$, the equation \eqref{eq:IntIdProof} can be written as
		\begin{equation}\label{eq:TempProof}
			\begin{split}
				\int_{\Cor} \EM{F}_0\cdot \EM{V}
				=& 	\int_{\partial \Cor^{r_0}\cap \partial B_{r_0}} \EM{W}\cdot\pare{\nor\cros\EM{E}}
				+\int_{\partial \Cor^{r_0}\cap \partial B_{r_0}} \EM{V}\cdot\pare{\nor\cros\EM{H}}
				\\&+\int_{\Cor\setminus \Cor^{r_0}}\EM{F}_0\cdot \EM{V}
				-\int_{\Cor^{r_0}} \tilde{\EM{F}}\cdot\EM{V}
				-\int_{\Cor^{r_0}} \EM{F}_1\cdot\EM{W}.
			\end{split}
		\end{equation}
		We choose $\pare{\EM{V},\EM{W}}$, a pair of solutions to the Maxwell system \eqref{eq:MaxwellTest}, as
		\[
		\EM{V}(x)=pe^{\rho\cdot x}\quad\text{and}\quad 
		\EM{W}(x)=\frac{1}{\im \omega\mu_0} \rho\cros p e^{\rho\cdot x}
		\]
		with the complex vectors $\rho$ and $p$ given in Lemma~\ref{lem:Nonvanish} for $k^2=\omega^2\Ep_0\Mp_0$.
		
		We assume $\EM{F}_2(0)\neq 0$, namely, $\EM{F}_0\neq 0$.
		Concerning the LHS of \eqref{eq:TempProof}, we obtain from Lemma~\ref{lem:Nonvanish}, in particular from \eqref{eq:NonvanishIntegral} and \eqref{eq:nontrivialFactor}, that 
		\begin{equation}
			\abs{\int_{\Cor} \EM{F}_0 \cdot \EM{V}} = \abs{\EM{F}_0\cdot
				p} \abs{\int_{\Cor} e^{\rho\cdot x} dx} \ge
			\abs{\EM{F}_0\cdot p} C_{\Cor} \tau^{-3}\ge C_0\tau^{-3}>0
		\end{equation}
		holds for $\tau$ sufficiently large, with a constant $C_0$ (strictly) positive and independent of $\tau$.
		We shall show in the rest of the proof that the RHS of \eqref{eq:TempProof} is bounded by $C\tau^{-(3+\alpha)}$ and hence leads to a contradiction.
		
		We first deal with the terms in \eqref{eq:TempProof} concerning $\EM{V}$.
		For the integral over $\Cor^{r_0}$ we have
		\begin{equation*}
			\abs{\int_{\Cor^{r_0}} \tilde{\EM{F}}\cdot\EM{V}}
			\le \|\EM{F}_2\|_{C^\alpha}|p|\int_{\Cor^{r_0}} |x|^{\alpha}e^{\tau d\cdot x}dx
			\le 3\|\EM{F}_2\|_{C^\alpha}\tau^{-(3+\alpha)}\int_{\Cor}|y|^{\alpha}e^{d\cdot y}dy.
		\end{equation*}
		Recalling \eqref{eq:innerdot} from Lemma~\ref{lem:Nonvanish} one obtains
		\begin{equation*}
			\int_{\Cor}|y|^{\alpha}e^{d\cdot y}dy
			\le \int_{\Cor}|y|^{\alpha}e^{-c_{\Cor}|y|}dy
			\le 2\pi \int_{0}^{\infty}r^{2+\alpha}e^{-c_{\Cor}r}dr
			=C_{\Cor,\alpha}<\infty.
		\end{equation*}
		As a consequence,
		we have 
		\begin{equation}\label{eq:TempProof1}
			\abs{\int_{\Cor^{r_0}} \tilde{\EM{F}}\cdot\EM{V}}
			\le C_{\Cor,\alpha}\|\EM{F}_2\|_{C^\alpha}\tau^{-(3+\alpha)}.
		\end{equation}
		For the integral over $\Cor\setminus \Cor^{r_0}$ we can derive in a similar way that
		\begin{equation}
			\begin{split}
				\abs{\int_{\Cor\setminus \Cor^{r_0}}\EM{F}_0\cdot \EM{V}}
				\le& \abs{p\cdot \EM{F}_0}\int_{\Cor\setminus \Cor^{r_0}}e^{\tau d\cdot x}dx
				\le \abs{p\cdot \EM{F}_0}\int_{\Cor\setminus \Cor^{r_0}}e^{-c_{\Cor}\tau|x|}dx
				\\ \le& 2\pi\abs{p\cdot \EM{F}_0}\int_{r_0}^{\infty}r^{2}e^{-c_{\Cor}\tau r }dr
				\\=&2\pi\abs{p\cdot \EM{F}_0}e^{-c_{\Cor}\tau r_0}
				\int_{0}^{\infty}(r+r_0)^{2}e^{-c_{\Cor}\tau r}dr
				\\\le& C_{\Cor,r_0}\abs{p\cdot \EM{F}_0} e^{-c_{\Cor}\tau r_0}
				<\infty,
			\end{split}	
		\end{equation}
		when $\tau$ is
		sufficiently large.
		As for the boundary integral in \eqref{eq:TempProof} we have the estimate
		\begin{equation}\label{eq:TempProof2}
			\begin{split}
				\abs{\int_{\partial \Cor^{r_0}\cap \partial B_{r_0}} \EM{V}\cdot\pare{\nor\cros\EM{H}}}
				&\le 3\int_{\partial \Cor^{r_0}\cap \partial B_{r_0}} \abs{\nor\cros\EM{H}} e^{-c_{\Cor}\tau |x|}dx
				\\&\le 3C_{\Cor,r_0}e^{-c_{\Cor}r_0\tau}
				\|\EM{H}\|_{H(\curl,\, \Cor^{r_0})}.
			\end{split}	
		\end{equation}
		Recall the identity \eqref{eq:relationVects} from Lemma~\ref{lem:Nonvanish} that the modulus of $\EM{W}$ has the order of $\tau^{-1}$, with respect to $\tau$.
		Hence similar to \eqref{eq:TempProof2} we have
		\begin{equation}
			\abs{\int_{\partial \Cor^{r_0}\cap \partial B_{r_0}} \EM{W}\cdot\pare{\nor\cros\EM{E}}}
			\le 3C_{\Cor,r_0}k^2\tau^{-1}e^{-c_{\Cor}r_0\tau}
			\|\EM{E}\|_{H(\curl,\, \Cor^{r_0})}.
		\end{equation}
		Lastly, by using a similar argument as for deriving \eqref{eq:TempProof1}, one can obtain
		\begin{equation}
			\begin{split}
				\abs{\int_{\Cor^{r_0}} \EM{F}_1\cdot\EM{W}}
				\le \omega\Ep_0\tau^{-1}\|\EM{F}_1\|_{C^{0}}
				\int_{\Cor^{r_0}} e^{\tau d\cdot x}dx
				\le C_{\Cor}\omega\Ep_0\|\EM{F}_1\|_{C^{0}}\tau^{-4}.
			\end{split}
		\end{equation}
		
		In summary,
		the assumption $\EM{F}_2(0)\neq 0$ implies
		\begin{equation}
			C_0\tau^{-3}\le \tilde{C}_{\EM{F}_1,\EM{F}_2,\Cor,r_0,\alpha}\tau^{-(3+\alpha)}
		\end{equation}
		with $C_{0}>0$
		and this holds for any $\tau$ sufficiently large, which is impossible.
		Therefore, we have shown by contradiction that $\EM{F}_2(0)= 0$.
		Finally, one can verify $\EM{F}_1(0)= 0$ in the same way but by taking
		\[
		\EM{V}(x)=-\frac{1}{\im \omega\Ep_0} \rho\cros p e^{\rho\cdot x}\quad\text{and}\quad 
		\EM{W}(x)=pe^{\rho\cdot x}.
		\]
	\end{proof}

	\subsection{Proof of Lemma~\ref{lem:Nonvanish}}

	\begin{proof}
		We assume, up to some exchanges of notations,
		that the plane $\Span\{w_2,w_3\}$
		separates $w_1$ to the other side of space from $w_j$, $j=4,\ldots,n$.
		Notice that when the polyhedral cone contains only $n=3$ number of edges, this assumption holds automatically.
		Since the convex polyhedral cone $\Cor=\Cor_{w_1,\ldots,w_n;0}$ fits into a half-space, we can find a positive constant $\kappa$ and a vector $z\in\mathbb S^2$ satisfying 
		\begin{equation}
			z\cdot w_1=0\quad \text{and}\quad z\cdot
			w_j<-\kappa,\ j=2,\ldots,n.
		\end{equation}
		For any constant $s>0$, we define
		\begin{equation} \label{RepsDef}
			d=d_s := \frac{z-s w_1}{\abs{z-s w_1}}
			=\frac{z-s w_1}{\sqrt{1+s^2}}.
		\end{equation}
		and $d^{\perp}:=w_1\cros z$. It is noticed that $d^{\perp}$ is perpendicular to $d_s$, independent of the choice of $s$. We also set
		\begin{equation} \label{rhoDef}
			\rho=\rho_s := \tau d_s + \im\sqrt{\tau^2+k^2} d^{\perp}.
		\end{equation}
		
		Denote 
		\[
		\Cor_0 := \left\{ c_1 w_1 + c_2 w_2 + c_3 w_3;\ c_1,c_2,c_3 > 0
		\right\}.
		\]
		It is observed by straightforward calculation (see also, \cite[Proof of Lemma 3.4]{BL2017}) that 
		\begin{equation}\label{eq:intK0}
			\int_{\Cor_0} e^{\rho\cdot x} dx = \frac{\abs{w_1 \wedge w_2
					\wedge w_3} }{(-\rho\cdot w_1) (-\rho\cdot w_2)
				(-\rho\cdot w_3)},
		\end{equation}
		and that 
		\begin{equation}
			\abs{\frac{\tau}{\rho\cdot w_1}} 
			=\abs{\frac{1}{d_s\cdot w_1}}
			= \frac{\sqrt{1+s^2}}{s} >\frac{1}{s}.
		\end{equation}
		Moreover, we have for $\tau\ge k$ that
		\begin{equation}\label{eq:temp1}
			\begin{split}
				\abs{\frac{\tau}{\rho\cdot w_j}}& = 
				\frac{\sqrt{1+s^2}}{\abs{z\cdot w_j -
						s w_1\cdot w_j + \im\sqrt{1+k^2/\tau^2}
						\sqrt{1+s^2} d^{\perp}\cdot w_j}}
				\\&=\frac{\sqrt{1+s^2}}{\sqrt{ \abs{z\cdot w_j - s w_1\cdot w_j}^2 +
						\pare{1+k^2/\tau^2} \pare{1+s^2} \abs{d^{\perp}\cdot
							w_j}^2}} 
				\\&\ge \frac{\sqrt{1+s^2}}{\sqrt{ 2\pare{1+s^2}
						+	\pare{1+k^2/\tau^2} \pare{1+s^2} }} 
				\ge\frac{1}{2},\qquad j=2,3.
			\end{split}	
		\end{equation}
		Combining \eqref{eq:intK0}-\eqref{eq:temp1} yields
		\begin{equation} \label{K0integral}
			\abs{\tau^3 \int_{\Cor_0} e^{\rho\cdot x} dx} > \frac{\abs{ w_1
					\wedge w_2 \wedge w_3}}{4 s}.
		\end{equation}
		
		\medskip
		Let us prove an $s$-independent upper bound for the integral
		of $\exp(\rho\cdot x)$ over $\Cor\setminus \Cor_0$ next. This cone is
		generated by the vectors $w_2,\ldots,w_n$. Hence for any unit vector
		$\theta\in \Cor\setminus \Cor_0$ there are $\alpha_2,\ldots,\alpha_n \in
		\mathbb R_+ \cup \{0\}$ such that
		\[
		\theta = \alpha_2 w_2 + \ldots + \alpha_n w_n.
		\]
		Then
		\begin{equation} \label{ReRhoTheta}
			d_s\cdot\theta 
			= \tau \sum_{j=2}^n \alpha_j \frac{z\cdot w_j -
				s w_1\cdot w_j}{\sqrt{1+s^2}}.
		\end{equation}
		Recall that $z\cdot w_j < -\kappa$ when $j=2,\ldots,n$. 
		If 
		\begin{equation} \label{sUpperBound1}
			0<s \leq \kappa/3<1/3,
		\end{equation}
		then we have 
		\begin{equation}
			z\cdot w_j - s w_1\cdot	w_j \leq -2\kappa/3,\quad j=2,\ldots,n.
		\end{equation}
		As a consequence we obtain
		\begin{equation} \label{almost}
			d_s\cdot\theta \leq - \sum_{j=2}^n \alpha_j
			\frac{2\kappa}{3\sqrt{1+1/3^2}} \leq -\frac{1}{2}\kappa,
		\end{equation}
		where we have used the fact that
		\[
		1 = \abs{\theta} \leq \sum_{j=2}^n \alpha_j \abs{w_j} = \sum_{j=2}^n
		\alpha_j .
		\]
		Finally, this
		gives
		\begin{equation}\label{upperB}
			\begin{split}
				\abs{\tau^3 \int_{\Cor\setminus \Cor_0} e^{\rho\cdot x} dx} 
				&\leq
				\tau^3\int_{\pare{\Cor\setminus \Cor_0} \cap \mathbb S^2}  \int_0^\infty
				e^{r\tau d_s\cdot\hat{x} } r^2 dr d\hat{x} 
				\\ &
				\leq \abs{\pare{\Cor\setminus \Cor_0} \cap \mathbb S^2} \tau^3 \int_0^\infty
				e^{-\kappa\tau r/2} r^2 dr 
				\le 16\pi\kappa^{-3} .
			\end{split}
		\end{equation}
		
		Let us have $s$ so small that the right-hand side of
		\eqref{K0integral} is larger than the one in \eqref{upperB}, for
		example if
		\begin{equation}
			s<s_0:=\frac{\abs{w_1\wedge w_2\wedge w_3} }{128\pi} \kappa^3
		\end{equation}
		and $s_0\le\kappa/3$ to satisfy \eqref{sUpperBound1}.
		In this case, for any $s\in(0,s_0)$ we obtain
		\begin{equation}
			\begin{split}
				\abs{\int_{\Cor} e^{\rho\cdot x} dx} 
				&\geq \abs{\int_{\Cor_0} e^{\rho\cdot x}
					dx} - \abs{\int_{\Cor\setminus \Cor_0} e^{\rho\cdot x} dx} 
				\\&>\frac{\abs{ w_1	\wedge w_2 \wedge w_3}}{4 s}\tau^{-3}
				-16\pi\kappa^{-3}\tau^{-3}
				>16\pi\kappa^{-3}\tau^{-3}.
			\end{split}
		\end{equation}
		
		We are left to verify the property \eqref{eq:nontrivialFactor}.
		Let
		\[
		p_s:=d^{\perp}-\im\sqrt{1+k^2/\tau^2}d_s.
		\]
		We shall show that $\lim_{\tau\to\infty}\EM{F}_0\cdot p_s$ exists and can not vanish for all $s\in(0,s_0)$.
		We write $\EM{F}_0$ as
		\[
		\EM{F}_0=b_1w_1+b_2z+b_3d^{\perp}.
		\]
		Then
		\[
		\EM{F}_0\cdot p_s=b_3+\im\sqrt{1+k^2/\tau^2}\,\frac{sb_1-b_2}{\sqrt{1+s^2}} \longrightarrow b_3 + i \frac{sb_1-b_2}{\sqrt{1+s^2}},
		\]
		as $\tau\to\infty$. The right-hand side is a real-analytic function of $s$.
		If it would vanish in the open interval $(0,s_0)$, then it must be zero everywhere. Considering its values at $s=0$, $s=1$ and $s\to\infty$ yields $b_1=b_2=b_3=0$ and hence leads to the contradiction that $\EM{F}_0=0$.
		
		In conclusion, one can find $s\in(0,s_0)$, $d=d_s$ as in \eqref{RepsDef}, $d^{\perp}\in\Ss$ with $d^{\perp}\cdot d=0$ satisfying \eqref{eq:innerdot}, \eqref{eq:NonvanishIntegral} and \eqref{eq:nontrivialFactor}.
		Lastly, the equation \eqref{eq:relationVects} can be verified by straightforward computations.
		
	\end{proof}

	\section{Non-radiating sources and inverse source scattering problems}

	In this section, we are concerned with the electromagnetic scattering induced by an active source. The source is characterized by two vectorial functions $\mathbf{J}_1\in L^2(\Omega; \mathbb{C}^3)$ and $\mathbf{J}_2\in L^2(\Omega; \mathbb{C}^3)$, which are, respectively, referred to as the electric and the magnetic current densities. Here, 
	in order to appeal for a general mathematical study, we consider the possible presence of both electric and magnetic sources, though only the electric source might be the physically meaningful one. The source radiates electromagnetic waves and satisfies the following Maxwell system,
	\begin{equation}\label{eq:Maxwell1}
		\begin{split}
			&\nabla\wedge \EM{E}(x)-\im\omega \Mp_0 \EM{H}(x)=\EM{J}_1(x),\quad x\in\RR^3,\\
			& \nabla\wedge \EM{H}(x)+\im\omega\Ep_0 \EM{E}(x)=\EM{J}_2(x),\quad\, x\in\RR^3,\\
			& \lim_{|x|\to\infty} |x|\pare{\sqrt{\mu_0}\EM{H}\times\frac{x}{|x|}-\sqrt{\varepsilon_0}\EM{E}}=0,
		\end{split}
	\end{equation}
	where $\omega\in\mathbb{R}_+$ signifies the frequency of the wave. The last limit in \eqref{eq:Maxwell1} is known as the Silver-M\"uller radiation condition which holds uniformly in all directions
	$\hat x:=x/|x|\in\mathbb{S}^2$, $x\in\mathbb{R}^3\setminus\{0\}$, and characterizes the outgoing nature of the electromagnetic waves.
	As a consequence, one has the following asymptotics as $|{x}|\rightarrow+\infty$ (cf. \cite{CoK13}),
	\begin{equation}\label{eq:radiation}
	\pare{\EM{E},\EM{H}}(x)=\frac{e^{\im k |x|}}{|x|} \pare{\far{\EM{E}},\far{\EM{H}}}(\hat{x})+\mathcal{O}\left(\frac{1}{|x|^2}\right), 
	\end{equation}
	where $k:=\omega\sqrt{\varepsilon_0\mu_0}$ is known as the wavenumber. The two fields $\mathbf{E}_\infty(\hat x)$ and $\mathbf{H}_\infty(\hat x)$ are known as, respectively,
	the electric and the magnetic far-field patterns. By the Rellich theorem (cf. \cite{CoK13}), they encode all the information of the scattered wave fields $\mathbf{E}$ and $\mathbf{H}$ in the exterior of any Lipschitz domain that encloses the support of $\EM{J}_1$ and $\EM{J}_2$. Moreover, $\mathbf{E}_\infty$ and $\mathbf{H}_\infty$ are analytic functions on the unit sphere $\mathbb{S}^2$ and satisfy the following one-to-one correspondence,
	\[
	\mathbf{H}_\infty(\hat x)=\hat x\wedge \mathbf{E}_\infty(\hat x)
	\quad\text{and}\quad
	\mathbf{E}_\infty(\hat x)=-\hat x\wedge \mathbf{H}_\infty(\hat x),\quad\ \ \forall \hat x\in\mathbb{S}^2. 
	\]
	The Maxwell system \eqref{eq:Maxwell1} is well understood. We refer to \cite{LRX16,Ned01} for the existence of a unique pair of solutions
	$(\mathbf{E},\mathbf{H})\in H_{\mathrm{loc}}(\mathrm{curl}, \mathbb{R}^3)\times H_{\mathrm{loc}}(\mathrm{curl}, \mathbb{R}^3)$. 
	
	An important inverse scattering problem that arises in practical applications is to recover the unknown/inaccessible source from its associated far-field measurement. That is, 
	\begin{equation}\label{eq:ip1}
		\mathbf{E}_\infty(\hat x), \hat x\in\mathbb{S}^2\mapsto (\Omega; \mathbf{J}_1, \mathbf{J}_2). 
	\end{equation}
	Since $\mathbf{E}$ is real analytic on the unit sphere, we see that $\mathbb{S}^2$ can actually be replaced by any open subset of the unit sphere by virtue of analytic continuation. In the generic case, the dimensions of the measurement data $\mathbf{E}(\hat x)$ (associated with a fixed frequency $\omega\in\mathbb{R}_+$) and the unknown source $(\mathbf{J}_1, \mathbf{J}_2)$ in \eqref{eq:ip1} are, respectively, two and three. Here, by dimension we mean the number of free variables of the underlying quantity. Hence, it is impractical to ask for the unique recovery of the inverse problem \eqref{eq:ip1}, and a more practical inverse scattering problem could be posed as follows,
	\begin{equation}\label{eq:ip11}
		\mathbf{E}_\infty(\hat x),\ \hat x\in\mathbb{S}^2\mapsto \Omega. 
	\end{equation}
	That is, instead of seeking to completely recover the unknown source functions, one intends to recover the location and the shape of the support of the source.
	In determining $\Omega$, it suffices to recover $\partial\Omega$, and hence one can easily verify that for any fixed frequency $\omega\in\mathbb{R}_+$, the inverse scattering problem in \eqref{eq:ip11} is formally posed. Nevertheless, we would like to point out that
	the inverse problem \eqref{eq:ip1} is linear whereas the inverse problem \eqref{eq:ip11} is nonlinear.  
	
	Associated with the inverse scattering problem \eqref{eq:ip11}, we are mainly concerned with the following two fundamental issues:
	\begin{enumerate}
		\item For what kind of source there is no radiation, namely $\mathbf{E}_\infty\equiv 0$? In such a case, the source is invisible to exterior measurements since if $\mathbf{E}_\infty\equiv 0$, one
		actually has by the Rellich theorem that $\mathbf{E}=\mathbf{H}\equiv 0$ in $\mathbb{R}^3\setminus\overline{\Omega}$. This kind of source is referred to as non-radiating or radiationless in the literature. 
		
		\item If the source is not invisible, namely that it is detectable, can one really identify it by using the corresponding far-field observation? This is the identifiability and unique recovery issue. Mathematically, it can be stated as follows. Suppose that $(\Omega; \mathbf{J}_1, \mathbf{J}_2)$ and $(\Omega'; \mathbf{J}'_1, \mathbf{J}'_2)$ are two electromagnetic source configurations and $\mathbf{E}_\infty$ and $\mathbf{E}_\infty'$ are the associated far-field patterns respectively. Can one conclude that 
		\begin{equation}\label{eq:unique1}
			\mathbf{E}_\infty(\hat x)=\mathbf{E}_\infty'(\hat x), \hat x\in\mathbb{S}^2\quad\mbox{only if}\quad \Omega=\Omega' ?
		\end{equation}
	\end{enumerate}
	We mention in passing some related uniqueness results in \cite{Bsource,KS1,KS2,Ikehata} for a similar inverse problem \eqref{eq:ip11} posed for the acoustic scattering. We would also like to mention that for the linear inverse problem \eqref{eq:ip1}, but with the measurement data given by $\mathbf{E}_\infty(\hat x, \omega)$ for all $\hat x\in\mathbb{S}^2$ and $\omega\in\mathbb{R}_+$, there is a vast amount of literature devoted to it, both theoretically and computationally. It is of different nature from the focus of the current study which is mainly concerned with a single far-field pattern. So it would be impossible for us to give a comprehensive review of that interesting topic. 
	
	Let us first consider the geometric characterization of radiationless sources. The study of radiationless sources has a long and colorful history, which dates back to Sommerfeld's theory of extended rigid electron in 1904 \cite{Som1,Som2}.  Many physicists had theoretically predicated the existence of non-radiating sources and it was even postulated that non-radiating charge distributions might be used as
	models for elementary particles and might lead to a ``theory of nature'' \cite{Blei,Boh,Deva2,Ehr,Frie,Gam,Gbu,Goe,Hoen,Kim,Mare}. As an easy example, for any $\Psi_1$ and $\Psi_2$ being $\mathbb{C}^3$-valued smooth functions with compact supports in $\mathbb{R}^3$, if one sets
	\begin{equation}\label{eq:nr1}
		\mathbf{J}_1:=\nabla\wedge \Psi_1-\im\omega \Mp_0 \Psi_2,\quad \mathbf{J}_2:=\nabla\wedge \Psi_2+\im\omega\Ep_0 \Psi_1, 
	\end{equation}
	then $(\mathbf{J}_1, \mathbf{J}_2)$ is radiationless. Using Theorem~\ref{thm:main1} in the previous section, we can derive a 
	geometric characterization of radiationless sources.
	To that end, the following definition of admissible sources shall be needed for our subsequent study. 
	
	\begin{defn}\label{def:corner}
		Given a source function $\EM{J}$, it is said to belong to the class $\claS$ if the following conditions are fulfilled:
		\begin{enumerate}
			\item	There exist a bounded Lipschitz domain $\Omega\subset\RR^3$ with a connected complement and a function $\Phi\in L_{loc}^2(\mathbb{R}^3; \CC^3)$ satisfying that $\EM{J} = \chi_{\overline{\Omega}} \Phi$.
			
			\item There is exist a polyhedral cone $\Cor=\Cor_{x_0}$ with the apex $x_0\in\partial\Omega$ such that 
			\[
			\Omega\cap B_{2r_0}(x_0)=\Cor_{x_0}\cap B_{2r_0}(x_0)\ \mbox{for some $r_0\in\mathbb{R}_+$},
			\] 
			and
			\[
			\Phi\in C^{\alpha}(B_{2r_0}(x_0))\ \mbox{for some $\alpha\in(0,1)$.}
			\]
			
			\item 
			There is a path in $\RR^3\setminus\overline\Omega$ joining $x_0$ to infinity.
			
		\end{enumerate}	
		In this case, we also say that {$(x_0; \Cor^{r_0}_{x_0})$ is a (generalized) 
			corner of $\EM{J}$, and $\EM{J}$ is $C^{\alpha}$ regular at the corner}.	
	\end{defn}
	\begin{rem}
		We use the expression ``generalized'' because a corner $(x_0; \Cor^{r_0}_{x_0})$ of a source function $\EM{J}$ might be degenerated in the sense that the case $\supp \EM{J}\cap B_{\varepsilon_0}(x_0) =\emptyset$ with a constant $\varepsilon_0>0$ is admitted in Definition~\ref{def:corner}. 
		As a simplest example, the trivial source function $\EM{J}\equiv 0$ belongs to the class $\mathscr{A}$ and any point $x_0\in \RR^3$ is a (generalized) corner of $\EM{J}$. 
	\end{rem}
	
	Then we have
	
	\begin{thm}\label{thm:main2}
		Consider an electric source $\mathbf{J}_1$ and a magnetic source $\mathbf{J}_2$ that are both supported in $\Omega$. Suppose that both $\mathbf{J}_1$ and $\mathbf{J}_2$ belong to the class $\mathscr{A}$ and let $(x_0, \mathcal{K}_{x_0}^{r_0})$ be a (generalized) corner of $\mathbf{J}_1$ and $\mathbf{J}_2$. If $(\Omega;\mathbf{J}_1, \mathbf{J}_2)$ is radiationless, namely the far-field pattern $(\mathbf{E}_\infty, \mathbf{H}_\infty)$ of the Maxwell system \eqref{eq:Maxwell1} associated with the source $(\Omega; \mathbf{J}_1, \mathbf{J}_2)$ is identically zero, 
		then one must have that
		\begin{equation}\label{eq:cond2}
			\mathbf{J}_1(x_0)=\mathbf{J}_2(x_0)=0. 
		\end{equation}
	\end{thm}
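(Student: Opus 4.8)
The plan is to reduce the global radiationless hypothesis to the local corner problem \eqref{eq:MaxwellCor} and then invoke Theorem~\ref{thm:main1}. First I would exploit the vanishing of the far-field. Since $\EM{J}_1,\EM{J}_2$ are supported in $\Omega$, the pair $(\EM{E},\EM{H})$ solves the \emph{homogeneous} Maxwell system in $\RR^3\setminus\overline\Omega$, so each Cartesian component is a radiating solution of the Helmholtz equation with wavenumber $k$. By the Rellich theorem together with the Silver--M\"uller radiation condition in \eqref{eq:Maxwell1}, the vanishing of $(\far{\EM{E}},\far{\EM{H}})$ forces $\EM{E}=\EM{H}=0$ in the unbounded component of $\RR^3\setminus\overline\Omega$. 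Because $\Omega$ has connected complement (Definition~\ref{def:corner}(1)) and there is a path in $\RR^3\setminus\overline\Omega$ joining $x_0$ to infinity (Definition~\ref{def:corner}(3)), unique continuation propagates this vanishing throughout $\RR^3\setminus\overline\Omega$, in particular up to $\partial\Omega$ in a neighbourhood of the apex $x_0$.

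Next I would transfer this exterior vanishing into homogeneous tangential data on the corner faces. Since $(\EM{E},\EM{H})\in H_{\mathrm{loc}}(\mathrm{curl},\RR^3)$ globally and $\EM{J}_1,\EM{J}_2\in L^2$ carry no singular surface currents, the tangential traces $\nor\cros\EM{E}$ and $\nor\cros\EM{H}$ are continuous across $\partial\Omega$. On the side exterior to $\Omega$ these traces vanish by the first step, hence they also vanish when taken from inside $\Omega$. By Definition~\ref{def:corner}(2) one may fix $r_0$ so that $\Omega\cap B_{2r_0}(x_0)=\Cor_{x_0}\cap B_{2r_0}(x_0)$; therefore $\nor\cros\EM{E}=\nor\cros\EM{H}=0$ holds on $\partial\Cor^{r_0}_{x_0}\setminus\partial B_{r_0}(x_0)$, the portion of $\partial\Omega$ consisting of the planar faces of the cone near the apex.

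With these boundary conditions in hand, the restriction of $(\EM{E},\EM{H})$ to the truncated corner $\Cor^{r_0}_{x_0}$ solves precisely the problem \eqref{eq:MaxwellCor} with $\EM{F}_1=\Phi_1$ and $\EM{F}_2=\Phi_2$, where inside $\Omega$ one has $\EM{J}_j=\chi_{\overline\Omega}\Phi_j=\Phi_j$ and $\Phi_j\in C^\alpha(\overline{\Cor^{r_0}_{x_0}})^3$ by Definition~\ref{def:corner}(2). Theorem~\ref{thm:main1} then yields $\Phi_1(x_0)=\Phi_2(x_0)=0$, and since $x_0\in\overline\Omega$ gives $\EM{J}_j(x_0)=\Phi_j(x_0)$, we obtain \eqref{eq:cond2}. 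I expect the main obstacle to be the rigorous justification of the second step: one must argue that the global $\Hcurl$ regularity of $(\EM{E},\EM{H})$ --- equivalently, the absence of surface currents on $\partial\Omega$ --- guarantees continuity of the tangential traces, so that the exterior vanishing descends to genuine homogeneous boundary data on the interior faces of the cone. Everything else is a matter of matching the local data to the hypotheses of Theorem~\ref{thm:main1}.
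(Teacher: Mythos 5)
Your proposal is correct and follows essentially the same route as the paper: Rellich's theorem forces $\EM{E}=\EM{H}=0$ in the (connected, unbounded) exterior of $\Omega$ reaching the apex, the tangential traces then vanish on the cone faces near $x_0$, and Theorem~\ref{thm:main1} applied in $\Cor^{r_0}_{x_0}$ with $\EM{F}_j=\Phi_j$ gives \eqref{eq:cond2}. The paper's proof is simply a terser version of the same reduction, leaving the trace-continuity step implicit.
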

	
	Before presenting the proof of Theorem~\ref{thm:main2}, we first present some interesting
	consequences. 
	The first
	one is a geometric characterization of a function space. 
	
	\begin{cor}
		Let $L_c^2(\mathbb{R}^3; \mathbb{C}^3)$ denote the space of $L_{loc}^2(\mathbb{R}^3; \mathbb{C}^3)$ functions with compact supports. Introduce the following function space,
		\begin{equation}\label{eq:fs1}
			\mathscr{F}:=\{\mathbf{F}\in L_c^2(\mathbb{R}^3; \mathbb{C}^3);\ \mathbf{F}=\nabla\wedge\nabla\wedge\mathbf{M}+c_0\mathbf{M},\ \ \mathbf{M}\in H_{loc}^2(\mathrm{curl}, \mathbb{R}^3) \},
		\end{equation}
		where $c_0$ is a nonzero constant. Then for any $\mathbf{F}\in\mathscr{F}\cap \mathscr{A}$, one has that $\mathbf{F}$ must be vanishing at its corner points. 
	\end{cor}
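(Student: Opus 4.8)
The plan is to exhibit $\mathbf{F}$, up to a nonzero constant factor, as one of the two current densities of a radiationless source, and then to invoke Theorem~\ref{thm:main2}. Throughout I relate the constant $c_0$ to the Maxwell system in \eqref{eq:Maxwell1} by taking $c_0=-\omega^2\Ep_0\Mp_0=-k^2$, which is the relevant choice; for a general negative $c_0$ one simply fixes the frequency $\omega$ accordingly.

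First I would choose a representative $\mathbf{M}\in H_{loc}^2(\mathrm{curl},\mathbb{R}^3)$ with $\mathbf{F}=\nabla\wedge\nabla\wedge\mathbf{M}+c_0\mathbf{M}$ and set
\[
\EM{E}:=\mathbf{M},\qquad \EM{H}:=\frac{1}{\im\omega\Mp_0}\,\nabla\wedge\mathbf{M}.
\]
A direct computation gives $\nabla\wedge\EM{E}-\im\omega\Mp_0\EM{H}=0$ and
\[
\nabla\wedge\EM{H}+\im\omega\Ep_0\EM{E}
=\frac{1}{\im\omega\Mp_0}\bigl(\nabla\wedge\nabla\wedge\mathbf{M}-k^2\mathbf{M}\bigr)
=\frac{1}{\im\omega\Mp_0}\,\mathbf{F},
\]
so that $(\EM{E},\EM{H})$ solves the Maxwell system \eqref{eq:Maxwell1} with electric current $\EM{J}_1=0$ and magnetic current $\EM{J}_2=(\im\omega\Mp_0)^{-1}\mathbf{F}$. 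This is exactly the structure of the radiationless sources in \eqref{eq:nr1}, read with $\Psi_1=\mathbf{M}$ and $\Psi_2=(\im\omega\Mp_0)^{-1}\nabla\wedge\mathbf{M}$.

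Next I would confirm that the source $(\EM{J}_1,\EM{J}_2)=(0,(\im\omega\Mp_0)^{-1}\mathbf{F})$ is radiationless. Since $\mathbf{F}\in L_c^2(\mathbb{R}^3;\mathbb{C}^3)$ is supported in $\overline\Omega$, the pair $(\EM{E},\EM{H})$ solves the homogeneous Maxwell system in the connected exterior $\mathbb{R}^3\setminus\overline\Omega$; once these fields are identified with the unique radiating solution they vanish outside $\overline\Omega$ by the Rellich theorem, whence $\EM{E}_\infty\equiv\EM{H}_\infty\equiv 0$. Granting this, both currents belong to the admissible class $\claS$: the trivial current $\EM{J}_1\equiv 0$ lies in $\claS$ with every point, in particular $x_0$, a generalized corner, while $\EM{J}_2$ is a constant multiple of $\mathbf{F}\in\claS$ and hence shares the corner $(x_0;\Cor^{r_0}_{x_0})$. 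Applying Theorem~\ref{thm:main2} to $(\Omega;\EM{J}_1,\EM{J}_2)$ then yields $\EM{J}_1(x_0)=\EM{J}_2(x_0)=0$, so $\mathbf{F}(x_0)=0$; as $x_0$ is an arbitrary corner point of $\mathbf{F}$, the claim follows.

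The main obstacle is precisely the radiationless verification above. The constructed fields solve the correct Maxwell system, but a priori $\mathbf{M}$ lies only in $H_{loc}^2(\mathrm{curl},\mathbb{R}^3)$ and need not satisfy the Silver--M\"uller condition, so they need not be the radiating solution. When $\mathbf{M}$ is compactly supported — the situation literally covered by \eqref{eq:nr1} — the fields vanish at infinity and the conclusion is immediate. In general the crux is to control the exterior behaviour of $\mathbf{M}$, i.e.\ to show that after subtracting an entire source-free Maxwell field (which does not alter $\mathbf{F}$) the remaining fields are compactly supported, thereby reducing to the compactly supported case. This is the one place where the global decay of $\mathbf{M}$, rather than its local regularity near the corner, must be addressed.
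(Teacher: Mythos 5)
Your route is the paper's route: for $c_0<0$ the paper likewise sets $\mathbf{J}_1\equiv 0$ and $\mathbf{J}_2=(\im\omega\Mp_0)^{-1}\mathbf{F}$ with $\omega^2\Ep_0\Mp_0=-c_0$, recognizes this as the radiationless form \eqref{eq:nr1} with $\Psi_1=\mathbf{M}$ and $\Psi_2=(\im\omega\Mp_0)^{-1}\nabla\wedge\mathbf{M}$, and invokes Theorem~\ref{thm:main2} together with the observation that the trivial current is in $\claS$ with every point a generalized corner. Your computation of the Maxwell system satisfied by $(\mathbf{M},(\im\omega\Mp_0)^{-1}\nabla\wedge\mathbf{M})$ is correct.

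Two things keep this from being a proof of the statement as written. First, the corollary allows an arbitrary nonzero $c_0$, and your reduction only realizes $c_0=-k^2<0$; the paper treats $c_0\in\mathbb{R}_+$ separately by the dual assignment $\mathbf{J}_1=-(\im\omega\Ep_0)^{-1}\mathbf{F}$, $\mathbf{J}_2\equiv 0$ with $\omega=\sqrt{c_0}$, so the positive case is simply absent from your argument. (Be aware that eliminating either field from \eqref{eq:Maxwell1} always produces the operator $\nabla\wedge\nabla\wedge-k^2$ with a minus sign, so the positive case cannot be obtained by merely swapping the roles of $\mathbf{E}$ and $\mathbf{H}$; whichever way it is handled, it requires a separate verification.) Second, the obstacle you name at the end is genuine and is exactly where your argument stops: since $\mathbf{M}$ is only in $H^2_{loc}(\mathrm{curl},\mathbb{R}^3)$, the constructed fields need not satisfy the Silver--M\"uller condition, so they need not coincide with the radiating solution, and the claim $\mathbf{E}_\infty\equiv 0$ does not follow from Rellich as you state it; the reduction to a compactly supported (or suitably decaying) potential is asserted but not carried out. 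The paper itself disposes of this with ``one can directly verify,'' so you have not missed an idea that the paper supplies, but as submitted your proof establishes the corollary only in the situation literally covered by \eqref{eq:nr1}, namely when $\mathbf{M}$ can be taken compactly supported.
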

	\begin{proof}
		We first consider the case that $c_0\in\mathbb{R}_+$. Suppose that $\mathbf{F}\in\mathscr{F}\cap \mathscr{A}$. Set $\omega=\sqrt{c_0}$ and $\varepsilon_0=\mu_0=1$, and
		\[
		\mathbf{J}_1=-\frac{1}{\mathrm{i}\omega\varepsilon_0}\mathbf{F},\quad \mathbf{J}_2\equiv 0. 
		\] 
		By straightforward calculations, one can verify that $\mathbf{J}_1$ is a radiationless magnetic source and hence by Theorem~\ref{thm:main2}, it must be vanishing at its corner points. Similarly, for the case $c_0\in\mathbb{R}_-$, by setting
		\[
		\mathbf{J}_1\equiv 0,\quad \mathbf{J}_2=\frac{1}{\mathrm{i}\omega\mu_0}\mathbf{F},
		\] 
		one can directly verify that $\mathbf{J}_2$ is a radiationless electric source and hence it must be vanishing at its corner points. 
	\end{proof}
	
	\begin{cor}\label{cor:3}
		Consider an electromagnetic source $(\Omega; \mathbf{J}_1, \mathbf{J}_2)$ as described in Theorem~\ref{thm:main2}. 
		For each fixed $j\in\{1,2\}$, if $(x_0;\mathcal{K}_{x_0}^{r_0})$ is a corner of $\mathbf{J}_j$ in the sense of Definition~\ref{def:corner}, then the source must radiate a nonzero far-field pattern whenever $\mathbf{J}_j(x_0)\neq 0$.
	\end{cor}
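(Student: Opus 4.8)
The plan is to obtain Corollary~\ref{cor:3} directly as the logical contrapositive of Theorem~\ref{thm:main2}, so that no new analysis is required beyond a careful check that the hypotheses transfer. First I would record that the standing assumptions of the corollary --- namely that $(\Omega; \mathbf{J}_1, \mathbf{J}_2)$ is a source configuration exactly ``as described in Theorem~\ref{thm:main2}'' --- already guarantee that both $\mathbf{J}_1$ and $\mathbf{J}_2$ lie in the admissible class $\mathscr{A}$ and that $(x_0; \mathcal{K}_{x_0}^{r_0})$ is a (generalized) corner shared by the two components in the sense of Definition~\ref{def:corner}. These are precisely the conditions under which Theorem~\ref{thm:main2} applies, so I may invoke it without further ado.

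Next I would argue by contradiction. Fix $j\in\{1,2\}$ and suppose $\mathbf{J}_j(x_0)\neq 0$ while, contrary to the claim, the source fails to radiate, i.e. the far-field pair $(\mathbf{E}_\infty, \mathbf{H}_\infty)$ associated with the Maxwell system \eqref{eq:Maxwell1} vanishes identically on $\mathbb{S}^2$. By the definition recalled just before Theorem~\ref{thm:main2}, this is exactly the statement that $(\Omega; \mathbf{J}_1, \mathbf{J}_2)$ is radiationless, which is the sole extra hypothesis needed to trigger the conclusion of that theorem. Applying Theorem~\ref{thm:main2} then forces $\mathbf{J}_1(x_0)=\mathbf{J}_2(x_0)=0$; in particular the chosen component satisfies $\mathbf{J}_j(x_0)=0$, contradicting $\mathbf{J}_j(x_0)\neq 0$. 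Hence the supposition that the far-field vanishes is untenable, and the source must radiate a nonzero far-field pattern whenever $\mathbf{J}_j(x_0)\neq 0$. Because of the one-to-one correspondence $\mathbf{H}_\infty(\hat x)=\hat x\wedge\mathbf{E}_\infty(\hat x)$, nonvanishing of $\mathbf{E}_\infty$ is equivalent to that of $\mathbf{H}_\infty$, so the phrase ``nonzero far-field pattern'' is unambiguous.

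I do not anticipate any genuine obstacle here, since the full analytic weight rests in Theorem~\ref{thm:main1} and its scattering reformulation Theorem~\ref{thm:main2}. The only point meriting a moment's attention is purely a matter of bookkeeping: Theorem~\ref{thm:main2} is phrased with the corner shared by \emph{both} sources and yields the simultaneous vanishing of both at $x_0$, so passing to the contrapositive automatically produces the per-index statement for each $j$ at once, and restricting attention to a single $j$ in no way weakens the available hypotheses.
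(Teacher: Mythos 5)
Your proof is correct and is essentially the paper's own (implicit) argument: Corollary~\ref{cor:3} is simply the contrapositive of Theorem~\ref{thm:main2}, which the paper states without further proof. Your bookkeeping remark about the shared corner and the per-index conclusion is accurate and adds nothing beyond what the theorem already delivers.
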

	
	Theorem~\ref{thm:main2} and Corollary~\ref{cor:3} 
 give geometric characterization and classification of radiating and radiationless electromagnetic sources. We next present the proof of Theorem~\ref{thm:main2}. 
	
	\begin{proof}[Proof of Theorem~\ref{thm:main2}]
		Suppose that $(\Omega;\mathbf{J}_1, \mathbf{J}_2)$ is radiationless, namely $(\mathbf{E}_\infty, \mathbf{H}_\infty)\equiv 0$. Then by the Rellich theorem
		(cf. \cite{CoK13}), we know that $\mathbf{E}=\mathbf{H}=0$ in the component of $\mathbb{R}^3\setminus\overline{\Omega}$ that is unbounded. Hence, we see from \eqref{eq:Maxwell1} and the definition of $\claS$ (Definition~\ref{def:corner}) that
		\begin{equation}\label{eq:Maxwell12}
			\begin{cases}
				&\nabla\wedge \EM{E}(x)-\im\omega \Mp_0 \EM{H}(x)=\EM{J}_1(x),\quad\, x\in \mathcal{K}_{x_0}^{r_0} ,\\
				& \nabla\wedge \EM{H}(x)+\im\omega\Ep_0 \EM{E}(x)=\EM{J}_2(x),\quad\ x\in \mathcal{K}_{x_0}^{r_0} ,\\
				&\nu(x)\wedge\mathbf{E}(x)=\nu(x)\wedge\mathbf{H}(x)=0,\quad x\in\partial \mathcal{K}_{x_0}^{r_0}\setminus\partial B_{r_0}(x_0),
			\end{cases}
		\end{equation}
		for any (external) corner point $x_0$ of $\Omega$.
		Therefore, we readily have from Theorem~\ref{thm:main1} that
		\[
		\mathbf{J}_1(x_0)=\mathbf{J}_2(x_0)=0.
		\]
	\end{proof}
	
	We proceed to deal with the unique recovery issue of the nonlinear inverse scattering problem \eqref{eq:ip11}. We first present a local unique recovery result as follows. 
	\begin{thm}\label{thm:main3}
		Let $(\Omega; \mathbf{J}_1, \mathbf{J}_2)$ and $(\Omega'; \mathbf{J}'_1, \mathbf{J}'_2)$ be two electromagnetic source configurations where $\mathbf{J}_1,\mathbf{J}_2,\mathbf{J}'_1,\mathbf{J}'_2$ belong to the class $\mathscr{A}$ in 
		terms of Definition~\ref{def:corner} such that $\mathbf{J}_1,\mathbf{J}_2$ are supported on $\Omega$ and $\mathbf{J}'_1,\mathbf{J}'_2$ on $\Omega'$. Let $\mathbf{E}_\infty$ and $\mathbf{E}_\infty'$ be the electric far-field patterns associated with $(\Omega; \mathbf{J}_1, \mathbf{J}_2)$ and $(\Omega'; \mathbf{J}'_1, \mathbf{J}'_2)$, respectively. If $\mathbf{E}_\infty(\hat x)=\mathbf{E}_\infty'(\hat x)$ for all $\hat x\in\mathbb{S}^2$. Then the set difference
		\begin{equation}\label{eq:diff1}
			\Omega\Delta\Omega':=(\Omega\setminus\overline{\Omega'})\cup (\Omega'\setminus\overline{\Omega}), 
		\end{equation}
		cannot contain a corner whose apex, say $x_0$, connects to infinity in the unbounded component of $\mathbb{R}^3\setminus\overline{\Omega\cup\Omega'}$ and satisfies
		\begin{equation}\label{eq:cc11}
			\begin{split}
				(i)&.~\mbox{$\mathbf{J}_1(x_0)\neq 0$ or $\mathbf{J}_2(x_0)\neq 0$ if $x_0$ is a corner of $\Omega$}, \\ (ii)&.~\mbox{$\mathbf{J}'_1(x_0)\neq 0$ or $\mathbf{J}'_2(x_0)\neq 0$ if $x_0$ is a corner of $\Omega'$}.
			\end{split}
		\end{equation}
	\end{thm}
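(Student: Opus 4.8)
The plan is to argue by contradiction, reducing the global inverse-source statement to the local vanishing result of Theorem~\ref{thm:main1}. Suppose, contrary to the assertion, that $\Omega\Delta\Omega'$ contains a corner with apex $x_0$ satisfying the stated hypotheses. By symmetry I may assume $x_0\in\Omega\setminus\overline{\Omega'}$, so that $x_0$ is a corner of $\Omega$; the complementary case $x_0\in\Omega'\setminus\overline{\Omega}$ is handled identically with the roles of the two configurations interchanged and condition $(ii)$ in place of $(i)$. Since $x_0\notin\overline{\Omega'}$, I can fix a radius $\delta\in(0,2r_0]$, small enough, with $B_\delta(x_0)\cap\overline{\Omega'}=\emptyset$, whence $\mathbf{J}'_1\equiv\mathbf{J}'_2\equiv 0$ on $B_\delta(x_0)$, while $\Omega\cap B_\delta(x_0)=\mathcal{K}_{x_0}\cap B_\delta(x_0)$ carries the $C^\alpha$ source $(\mathbf{J}_1,\mathbf{J}_2)$ by Definition~\ref{def:corner}.

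First I would pass to the difference fields $\tilde{\mathbf{E}}:=\mathbf{E}-\mathbf{E}'$ and $\tilde{\mathbf{H}}:=\mathbf{H}-\mathbf{H}'$, which lie in $H_{\mathrm{loc}}(\mathrm{curl},\mathbb{R}^3)$, solve the Maxwell system \eqref{eq:Maxwell1} with right-hand sides $\mathbf{J}_1-\mathbf{J}'_1$ and $\mathbf{J}_2-\mathbf{J}'_2$, and have vanishing far-field pattern $\mathbf{E}_\infty-\mathbf{E}'_\infty\equiv 0$. By the Rellich theorem together with the Silver--M\"uller condition, this forces $\tilde{\mathbf{E}}=\tilde{\mathbf{H}}=0$ throughout the unbounded component $G$ of $\mathbb{R}^3\setminus\overline{\Omega\cup\Omega'}$. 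The hypothesis that $x_0$ connects to infinity inside $G$, combined with the strict convexity of the cone $\mathcal{K}_{x_0}$ (whose complement near the apex is connected), is exactly what I need in order to conclude that the punctured exterior neighborhood $B_\delta(x_0)\setminus\overline{\mathcal{K}_{x_0}}$ lies entirely in $G$, so that $\tilde{\mathbf{E}}$ and $\tilde{\mathbf{H}}$ vanish there as well.

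The crux of the argument is then to convert this exterior vanishing into homogeneous tangential boundary data on the faces of the corner. Because $\tilde{\mathbf{E}},\tilde{\mathbf{H}}\in H_{\mathrm{loc}}(\mathrm{curl})$ across the whole of $B_\delta(x_0)$, their tangential traces $\nu\wedge\tilde{\mathbf{E}}$ and $\nu\wedge\tilde{\mathbf{H}}$ are continuous across the interface $\partial\mathcal{K}_{x_0}\cap B_\delta(x_0)$; evaluating these traces from the exterior side, where the fields are identically zero, yields $\nu\wedge\tilde{\mathbf{E}}=\nu\wedge\tilde{\mathbf{H}}=0$ on $\partial\mathcal{K}_{x_0}^\delta\setminus\partial B_\delta(x_0)$. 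I expect this trace-continuity step to be the main obstacle, as it requires a careful use of the $H(\mathrm{curl})$ trace theory on the Lipschitz faces of the polyhedral cone together with the fact that no surface current is induced at an interface across which the solution remains in $H(\mathrm{curl})$.

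Finally, restricting to the truncated cone $\mathcal{K}_{x_0}^\delta:=\mathcal{K}_{x_0}\cap B_\delta(x_0)$ and using $\mathbf{J}'_1=\mathbf{J}'_2=0$ there, the pair $(\tilde{\mathbf{E}},\tilde{\mathbf{H}})$ satisfies precisely the local corner problem \eqref{eq:MaxwellCor}, with $\mathbf{F}_1=\mathbf{J}_1$ and $\mathbf{F}_2=\mathbf{J}_2$ belonging to $C^\alpha(\overline{\mathcal{K}_{x_0}^\delta})^3$. Theorem~\ref{thm:main1} then delivers $\mathbf{J}_1(x_0)=\mathbf{J}_2(x_0)=0$, in direct contradiction with hypothesis $(i)$. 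This contradiction completes the proof.
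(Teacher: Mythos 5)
Your proof is correct and follows essentially the same route as the paper's: subtract the two solution pairs, use the Rellich theorem to conclude that the difference fields vanish in the unbounded component $\mathbf{G}$ of $\mathbb{R}^3\setminus\overline{\Omega\cup\Omega'}$, deduce homogeneous tangential data on the lateral faces of the truncated cone, and invoke Theorem~\ref{thm:main1} to contradict \eqref{eq:cc11}. The only difference is that you make explicit two steps the paper leaves implicit --- that the punctured exterior neighborhood $B_\delta(x_0)\setminus\overline{\Cor_{x_0}}$ lies entirely in $\mathbf{G}$ (so the whole lateral boundary receives zero data), and the $H(\mathrm{curl})$ tangential-trace matching across the cone faces --- which is a refinement rather than a departure.
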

	
	\begin{proof}
		We prove the theorem by a reductio ad absurdum. Let $(\mathbf{E}, \mathbf{H})$ and $(\mathbf{E}',\mathbf{H}')$ be the electromagnetic fields of the Maxwell system \eqref{eq:Maxwell1} associated with $(\Omega; \mathbf{J}_1, \mathbf{J}_2)$ and $(\Omega'; \mathbf{J}'_1, \mathbf{J}'_2)$, respectively. Set $\mathbf{G}$ to denote the unbounded connected component of $\mathbb{R}^3\setminus\overline{\Omega\cup\Omega'}$. By the Rellich theorem
		and the fact that $\mathbf{E}_\infty=\mathbf{E}'_\infty$, we readily have that
		\begin{equation}\label{eq:ar1}
			(\mathbf{E}, \mathbf{H})=(\mathbf{E}', \mathbf{H}')\quad\mbox{in}\ \ \mathbf{G}. 
		\end{equation}
		
		Without loss of generality, we assume that $x_0\in\overline{\Omega}\setminus\Omega'$, and we let $\mathcal{K}_{x_0}^{r_0}$ with $r_0\in\mathbb{R}_+$ sufficiently small, be a corner of $\Omega$ such that $\mathcal{K}_{x_0}^{r_0}\subset\overline{\Omega}\setminus\Omega'$. Clearly, there hold
		\begin{equation}\label{eq:ar2}
			\begin{cases}
				&\nabla\wedge \EM{E}(x)-\im\omega \Mp_0 \EM{H}(x)=\EM{J}_1(x),\quad\, x\in\mathcal{K}_{x_0}^{r_0},\\
				& \nabla\wedge \EM{H}(x)+\im\omega\Ep_0 \EM{E}(x)=\EM{J}_2(x),\quad\ x\in\mathcal{K}_{x_0}^{r_0},
			\end{cases}
		\end{equation}
		and 
		\begin{equation}\label{eq:ar3}
			\begin{cases}
				&\nabla\wedge \EM{E}'(x)-\im\omega \Mp_0 \EM{H}'(x)=0,\quad\, x\in\mathcal{K}_{x_0}^{r_0},\\
				& \nabla\wedge \EM{H}'(x)+\im\omega\Ep_0 \EM{E}'(x)=0,\quad\ x\in\mathcal{K}_{x_0}^{r_0},
			\end{cases}
		\end{equation}
		Set
		\[
		\widetilde{\EM{E}}=\mathbf{E}-\mathbf{E}',\quad\widetilde{\EM{E}}=\mathbf{H}-\mathbf{H}'.
		\]
		By \eqref{eq:ar1}--\eqref{eq:ar3}, one sees that 
		\begin{equation}\label{eq:ar4}
			\begin{cases}
				&\nabla\wedge \widetilde{\EM{E}}(x)-\im\omega \Mp_0 \widetilde{\EM{H}}(x)=\EM{J}_1(x),\quad\, x\in\mathcal{K}_{x_0}^{r_0},\\
				& \nabla\wedge \widetilde{\EM{H}}(x)+\im\omega\Ep_0 \widetilde{\EM{E}}(x)=\EM{J}_2(x),\quad\ x\in\mathcal{K}_{x_0}^{r_0},\\
				&\nu(x)\wedge\widetilde{\mathbf{E}}(x)=\nu(x)\wedge\widetilde{\mathbf{H}}(x)=0,\quad x\in\partial\mathcal{K}_{x_0}^{r_0}\cap\partial\mathbf{G}. 
			\end{cases}
		\end{equation}
		Hence, by Theorem~\ref{thm:main1}, we readily have that
		\[
		\mathbf{J}_1(x_0)=\mathbf{J}_2(x_0)=0,
		\]
		which is a contradiction to \eqref{eq:cc11}. 
	\end{proof}
	
	As an immediate consequence of Theorem~\ref{thm:main3}, we have
	
	\begin{thm}\label{thm:main4}
		Let $(\Omega; \mathbf{J}_1, \mathbf{J}_2)$ be an electromagnetic source from the class $\mathscr{A}$ in 
		terms of Definition~\ref{def:corner}. Suppose that $\Omega$ is a convex polyhedron and at each of its apexes, either $\mathbf{J}_1$
		or $\mathbf{J}_2$ is non-vanishing. Then $\mathbf{E}_\infty$ uniquely determines $\Omega$.  
	\end{thm}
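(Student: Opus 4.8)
The plan is to argue by contradiction, reducing the global uniqueness statement to the local vanishing property supplied by Theorem~\ref{thm:main3} at a single, carefully chosen vertex. Suppose $\mathbf{E}_\infty$ did not determine $\Omega$. Then there would be a second admissible configuration $(\Omega'; \mathbf{J}'_1, \mathbf{J}'_2)$, with $\Omega'$ a convex polyhedron at whose apexes $\mathbf{J}'_1$ or $\mathbf{J}'_2$ is non-vanishing, producing the same electric far-field pattern, $\mathbf{E}'_\infty=\mathbf{E}_\infty$, yet with $\Omega'\neq\Omega$. The entire task is then to exhibit one corner of the symmetric difference $\Omega\Delta\Omega'$ meeting all hypotheses of Theorem~\ref{thm:main3}: its apex lies in the unbounded component of the exterior and carries a non-vanishing source. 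Such a corner is forbidden by Theorem~\ref{thm:main3}, and the contradiction forces $\Omega=\Omega'$.

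First I would locate a vertex of one polyhedron lying strictly outside the other. Since $\Omega$ and $\Omega'$ are distinct convex polyhedra, they cannot each have all their vertices contained in the closure of the other: a convex polyhedron is the convex hull of its vertices, so if every vertex of $\Omega$ lay in $\overline{\Omega'}$ we would get $\overline{\Omega}\subseteq\overline{\Omega'}$, and symmetrically; both inclusions together would force $\Omega=\Omega'$. Hence, after possibly interchanging the roles of $\Omega$ and $\Omega'$, there is a vertex $v$ of $\Omega$ with $v\notin\overline{\Omega'}$.

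The key step is to upgrade this vertex to one that is simultaneously outside $\overline{\Omega'}$ and visibly connected to infinity in $\mathbb{R}^3\setminus\overline{\Omega\cup\Omega'}$. Since $\overline{\Omega'}$ is compact and convex and $v\notin\overline{\Omega'}$, I would separate them by a hyperplane, i.e. fix a unit vector $d$ with $d\cdot v > \sup_{y\in\overline{\Omega'}} d\cdot y$. Let $x_0$ be a maximizer of the linear functional $x\mapsto d\cdot x$ over $\overline{\Omega}$; such a maximizer can be chosen to be a vertex of $\Omega$, and $d\cdot x_0\ge d\cdot v$ still exceeds $\sup_{\overline{\Omega'}}d\cdot(\cdot)$, so $x_0\notin\overline{\Omega'}$ as well. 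Then $x_0$ maximizes $d\cdot x$ over the whole compact set $\overline{\Omega\cup\Omega'}$, which means the ray $\{x_0+td:\ t>0\}$ satisfies $d\cdot(x_0+td) > \max_{\overline{\Omega\cup\Omega'}}d\cdot(\cdot)$ for every $t>0$ and hence lies entirely in the complement of $\overline{\Omega\cup\Omega'}$; being unbounded, this ray lies in the unbounded connected component $\mathbf{G}$ and realizes a path joining $x_0$ to infinity. Choosing $r_0>0$ small enough that $B_{2r_0}(x_0)\cap\overline{\Omega'}=\emptyset$, the corner $\mathcal{K}_{x_0}^{r_0}$ of $\Omega$ sits inside $\overline{\Omega}\setminus\overline{\Omega'}$ and is therefore a corner of $\Omega\Delta\Omega'$ whose apex $x_0$ connects to infinity in $\mathbf{G}$; and by hypothesis $\mathbf{J}_1(x_0)\neq 0$ or $\mathbf{J}_2(x_0)\neq 0$, so condition (i) of Theorem~\ref{thm:main3} holds (or condition (ii), if the interchange put $x_0$ on $\Omega'$). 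This is exactly the configuration excluded by Theorem~\ref{thm:main3}, giving the desired contradiction.

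The main obstacle, and the only genuinely nontrivial point beyond bookkeeping, is precisely this accessibility of the chosen vertex from the unbounded exterior component, since the corner hypothesis of Theorem~\ref{thm:main3} must be checked against $\mathbb{R}^3\setminus\overline{\Omega\cup\Omega'}$ rather than against a single polyhedron. The separating-direction construction resolves it cleanly by producing a straight ray to infinity that avoids both polyhedra at once; had one merely taken an arbitrary vertex outside $\overline{\Omega'}$, the outward directions at that vertex could in principle point back toward $\Omega'$, and the connection to $\mathbf{G}$ would then require a more delicate topological argument. With the vertex and the ray in hand, the verification that $\mathbf{J}_1,\mathbf{J}_2$ remain $C^\alpha$-regular at $x_0$ and that $\mathcal{K}_{x_0}^{r_0}$ carries the required polyhedral-cone structure is immediate from $\Omega\in\mathscr{A}$ and the convexity of $\Omega$, so no further work is needed.
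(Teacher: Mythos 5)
Your proposal is correct and follows the same route as the paper: reduce to Theorem~\ref{thm:main3} by exhibiting a vertex of one polyhedron that lies in the symmetric difference, carries a non-vanishing source, and connects to infinity outside $\overline{\Omega\cup\Omega'}$. The paper merely asserts the existence of such an accessible vertex, whereas you justify it with the separating-hyperplane/extremal-vertex argument (maximizing $d\cdot x$ over $\overline{\Omega}$ and following the ray in direction $d$), which is a legitimate filling-in of a detail the paper leaves implicit rather than a different approach.
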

	\begin{proof}
		Suppose that there exists another electromagnetic source $(\Omega'; \mathbf{J}'_1, \mathbf{J}'_2)$ with $\mathbf{E}_\infty=\mathbf{E}_\infty'$. If $\Omega\neq\Omega'$ then there is a corner $x_0$ of let's say $\Omega$ such that $x_0 \in \overline{\Omega}\setminus\Omega'$ and it can be connected to infinity outside of $\Omega\cup\Omega'$. But since $\EM{J}_1(x_0)\neq0$ or $\EM{J}_2(x_0)\neq0$ Theorem~\ref{thm:main3} implies that $x_0$ cannot be a corner of $\Omega\Delta\Omega'$. In other words $x_0 \in \overline{\Omega'}$ too. The contradiction gives $\Omega=\Omega'$. 
	\end{proof}
	
	It is remarked that one can show the same unique recovery result
	as that in Theorem~\ref{thm:main4} for a bit more general case where the source support $\Omega$ consists of finitely many disjoint convex polyhedra. 
	
	\section{Inverse medium scattering and interior transmission eigenvalue problem}
	
	In this section, we consider another scenario of practical importance where the electromagnetic scattering is induced by an inhomogeneous medium and an incident wave field. Suppose an inhomogeneous medium is embedded in a
	uniformly homogeneous space with electric permittivity $\varepsilon_0$ and magnetic permeability $\mu_0$. The inhomogeneous medium is supported in $\Omega$ and is characterized by its material parameters including the electric permittivity $\varepsilon\in L^\infty(\Omega; \mathbb{R}_+)$, magnetic permeability $\mu\in L^\infty(\Omega; \mathbb{R}_+)$ and electric conductivity $\sigma\in L^\infty(\Omega; \mathbb{R}_+^0)$. Throughout the rest of the paper, it is assumed that $\Omega$ is a bounded Lipschitz domain in $\mathbb{R}^3$ with a connected complement $\mathbb{R}^3\setminus\overline{\Omega}$. In what follows, for notational convenience, we extend $\varepsilon, \mu$ and $\sigma$ to the whole space $\mathbb{R}^3$ by setting $\varepsilon(x)=\varepsilon_0$, $\mu(x)=\mu_0$ and $\sigma(x)=0$ for $x\in\mathbb{R}^3\setminus\overline{\Omega}$. Associated with the scattering medium $(\Omega; \varepsilon, \mu, \sigma)$ described above, the electromagnetic scattering is then induced by sending a wave field $(\mathbf{E}^i, \mathbf{H}^i)$ impinging on $\Omega$. It is a pair of entire solutions to the following homogeneous Maxwell system
	\begin{equation}\label{eq:Maxwellh}
		\curl  \EM{E}^i-\im\omega \Mp_0 \EM{H}^i=0,\quad \curl \EM{H}^i+\im\omega\Ep_0 \EM{E}^i=0\quad\mbox{in}\ \ \mathbb{R}^3.
	\end{equation}
	The interaction of the incident field $(\mathbf{E}^i, \mathbf{H}^i)$ and the scattering body $(\Omega; \varepsilon, \mu, \sigma)$ generates electromagnetic wave scattering. We let $(\mathbf{E}, \mathbf{H})$ and $(\mathbf{E}^t, \mathbf{H}^t)$ denote, respectively, the scattered and the
	total electromagnetic fields. There hold 
	\[
	(\mathbf{E}^t, \mathbf{H}^t)=(\mathbf{E}^i, \mathbf{H}^i)+(\mathbf{E}, \mathbf{H})\quad\mbox{in}\ \ \mathbb{R}^3,
	\]
	and the following Maxwell system 
	\begin{equation}\label{eq:Maxwell2}
		\begin{split}
			& \nabla\wedge\mathbf{E}^t({x})-\mathrm{i}\omega\mu({x}) \mathbf{H}^t({x})=0,\hspace*{1.8cm} {x}\in\mathbb{R}^3,\medskip\\
			&\nabla\wedge\mathbf{H}^t({x})+(\mathrm{i}\omega\varepsilon({x})-\sigma({x})) \mathbf{E}^t({x})=0,\quad {x}\in\mathbb{R}^3,\medskip\\
			& \lim_{|x|\to\infty} |x|\pare{\sqrt{\mu_0}\EM{H}\times\frac{x}{|x|}-\sqrt{\varepsilon_0}\EM{E}}=0.
		\end{split}
	\end{equation}
	We refer to \cite{CoK13,LRX16,Ned01} for the 
	existence of a unique pair of solutions
	$(\mathbf{E},\mathbf{H})\in H_{\mathrm{loc}}(\mathrm{curl}, \mathbb{R}^3)\times H_{\mathrm{loc}}(\mathrm{curl}, \mathbb{R}^3)$ and the following far-field expansion 
	\begin{equation}\label{eq:farfield}
		\pare{\EM{E},\EM{H}}(x)=\frac{e^{\im k |x|}}{|x|} \pare{\far{\EM{E}},\far{\EM{H}}}(\hat{x})+\mathcal{O}\left(\frac{1}{|x|^2}\right), 
	\end{equation}
	where $k=\omega\sqrt{\varepsilon_0\mu_0}$.  
	
	Similar to the inverse source scattering problem \eqref{eq:ip1}, the inverse medium scattering problem can be stated as follows,
	\begin{equation}\label{eq:ip2}
		\mathbf{E}_\infty(\hat x),\ \hat x\in\mathbb{S}^2\mapsto (\Omega; \varepsilon,\mu, \sigma). 
	\end{equation}
	It can be verified directly that the inverse medium scattering problem \eqref{eq:ip2} is nonlinear and under-determined in the generic case. In what follows, similar to the inverse source 
	scattering case, we first consider the invisibility issue for the inverse medium scattering problem \eqref{eq:ip2}, namely $\mathbf{E}_\infty\equiv 0$.

	If $\mathbf{E}_\infty\equiv 0$, by the Rellich theorem,
	one has $(\mathbf{E}, \mathbf{H})=0$ in $\mathbb{R}^3\setminus\overline{\Omega}$. Hence, it is straightforward to show that in this case, there holds
	\begin{equation}\label{eq:ITP}
		\begin{cases}
			\nabla\wedge \EM{E}^t-\mathrm{i}\omega\Mp \EM{H}^t=0,\quad
			\nabla \wedge \EM{H}^t+\mathrm{i}\omega\Eci \EM{E}^t=0
			&\mbox{in\ \ $\Omega$},\\
			\nabla\wedge \EM{E}^0-\mathrm{i}\omega\Mp_0 \EM{H}^0=0,\quad
			\nabla\wedge \EM{H}^0+\mathrm{i}\omega\Ep_0 \EM{E}^0=0
			&\mbox{in\ \ $\Omega$},\\
			\quad\nu\cros \EM{E}^t=\nu\cros \EM{E}^0,\quad
			\qquad\nu\cros \EM{H}^t=\nu\cros \EM{H}^0
			&\mbox{on\ \ $\partial \Omega$},
		\end{cases}
	\end{equation}
	where $\gamma:=\varepsilon+\mathrm{i}\sigma/\omega$ with $(\Omega; \varepsilon,\mu,\sigma)$ introduced earlier. Equations \eqref{eq:ITP} is known as the interior transmission eigenvalue problem in the literature. 
	If for a certain $\omega\in\mathbb{R}_+$, there exist nontrivial $(\mathbf{E}^t,\mathbf{H}^t)\in H_{\mathrm{loc}}(\mathrm{curl}, \mathbb{R}^3)\times H_{\mathrm{loc}}(\mathrm{curl}, \mathbb{R}^3)$ and $(\mathbf{E}^0,\mathbf{H}^0)\in H_{\mathrm{loc}}(\mathrm{curl}, \mathbb{R}^3)\times H_{\mathrm{loc}}(\mathrm{curl}, \mathbb{R}^3)$ satisfying \eqref{eq:ITP}, then $\omega$ is called an interior transmission eigenvalue and $(\mathbf{E}^t, \mathbf{H}^t), (\mathbf{E}^0, \mathbf{H}^0)$ are called the corresponding eigenfunctions. The interior transmission eigenvalue problem is an important type of non-self-adjoint problem in the spectral theory associated with wave phenomena and its study has a long and colorful history; see \cite{CGH,CHreview,CoK13,CM,CKP, RS,PS,Kir} and the references therein. From our discussion above, it is seen that if invisibility occurs, then $\omega$ is an interior transmission eigenvalue and the restrictions of the total wave field $(\mathbf{E}^t,\mathbf{H}^t)$ and incident wave field $(\mathbf{E}^0,\mathbf{H}^0)$ form the corresponding eigenfunctions. On the other hand, it is straightforward to show that if $(\mathbf{E}^0, \mathbf{H}^0)$ is an eigenfunction associated with $(\Omega; \varepsilon,\mu,\sigma)$, and can be
	extended to the whole space $\mathbb{R}^3$ to form a pair of entire solutions to the Maxwell system \eqref{eq:Maxwellh}, then as the incident field to the scattering system \eqref{eq:Maxwell2}, the resulting far-field pattern is identically zero; that is, invisibility occurs. In order to gain more insights about the invisibility, we first provide a geometric characterization of the interior transmission eigenfunctions. 
	
	\begin{thm}\label{thm:main5}
		Consider the interior transmission eigenvalue problem \eqref{eq:ITP}, and suppose that $(\mathbf{E}^t, \mathbf{H}^t)$ and $(\mathbf{E}^0, \mathbf{H}^0)$ are a pair of eigenfunctions associated with the eigenvalue $\omega\in\mathbb{R}_+$. Assume that $\Omega$ possesses a corner $\mathcal{K}_{x_0}^{r_0}$ and moreover,
		\begin{equation}\label{eq:rega1}
			(\mu-\mu_0)\mathbf{H}^t,\ (\gamma-\varepsilon_0)\mathbf{E}^t\in C^\alpha(\overline{\mathcal{K}_{x_0}^{r_0}})^3,
		\end{equation} 
		for some $\alpha\in (0, 1)$. Then there holds
		\begin{equation}\label{eq:rega2}
			(\mu-\mu_0)(x_0)\mathbf{H}^t(x_0)=(\gamma-\varepsilon_0)(x_0)\mathbf{E}^t(x_0)=0. 
		\end{equation}
	\end{thm}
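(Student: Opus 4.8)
The plan is to subtract the two Maxwell systems in \eqref{eq:ITP}, recognize the difference as an instance of the corner problem \eqref{eq:MaxwellCor}, and then invoke Theorem~\ref{thm:main1} directly. First I would introduce the difference fields
\[
\EM{E}:=\EM{E}^t-\EM{E}^0,\qquad \EM{H}:=\EM{H}^t-\EM{H}^0,
\]
restricted to the corner $\mathcal{K}_{x_0}^{r_0}$, which sits inside $\Omega$ by the definition of a corner. Since both eigenfunction pairs lie in $H_{\mathrm{loc}}(\mathrm{curl},\mathbb{R}^3)\times H_{\mathrm{loc}}(\mathrm{curl},\mathbb{R}^3)$, the pair $(\EM{E},\EM{H})$ belongs to $H(\mathrm{curl},\mathcal{K}_{x_0}^{r_0})\times H(\mathrm{curl},\mathcal{K}_{x_0}^{r_0})$, so it is an admissible solution in the sense required by Theorem~\ref{thm:main1}.

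Next I would compute the curl equations satisfied by $(\EM{E},\EM{H})$ inside the corner. Using that $(\EM{E}^0,\EM{H}^0)$ solves the constant-coefficient system in \eqref{eq:ITP}, and writing $\nabla\cros\EM{E}^t=\im\omega\Mp\EM{H}^t$ and $\nabla\cros\EM{H}^t=-\im\omega\Eci\EM{E}^t$ from the medium system, a direct rearrangement yields
\[
\nabla\cros\EM{E}-\im\omega\Mp_0\EM{H}=\im\omega(\Mp-\Mp_0)\EM{H}^t=:\EM{F}_1,\qquad
\nabla\cros\EM{H}+\im\omega\Ep_0\EM{E}=-\im\omega(\Eci-\Ep_0)\EM{E}^t=:\EM{F}_2,
\]
in $\mathcal{K}_{x_0}^{r_0}$. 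For the boundary condition, the transmission relations $\nor\cros\EM{E}^t=\nor\cros\EM{E}^0$ and $\nor\cros\EM{H}^t=\nor\cros\EM{H}^0$ on $\partial\Omega$ immediately give $\nor\cros\EM{E}=\nor\cros\EM{H}=0$ on the faces $\partial\mathcal{K}_{x_0}^{r_0}\setminus\partial B_{r_0}(x_0)$, which are contained in $\partial\Omega$. Hence $(\EM{E},\EM{H})$ is a solution of \eqref{eq:MaxwellCor} with source terms $\EM{F}_1,\EM{F}_2$.

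Finally, the regularity hypothesis \eqref{eq:rega1} is precisely what guarantees $\EM{F}_1,\EM{F}_2\in C^\alpha(\overline{\mathcal{K}_{x_0}^{r_0}})^3$, so Theorem~\ref{thm:main1} applies and forces $\EM{F}_1(x_0)=\EM{F}_2(x_0)=0$. Since $\omega\in\mathbb{R}_+$, dividing by $\im\omega$ yields $(\Mp-\Mp_0)(x_0)\EM{H}^t(x_0)=(\Eci-\Ep_0)(x_0)\EM{E}^t(x_0)=0$, which is exactly \eqref{eq:rega2}. I do not anticipate a genuine obstacle here: the two points requiring care are checking that $\mathcal{K}_{x_0}^{r_0}\subset\Omega$ so that the medium equations hold throughout the corner, and verifying that the algebraic rearrangement producing $\EM{F}_1,\EM{F}_2$ matches exactly the $C^\alpha$ quantities posited in \eqref{eq:rega1}. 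Both are routine bookkeeping, so the entire argument is essentially a reduction to Theorem~\ref{thm:main1} rather than a new estimate.
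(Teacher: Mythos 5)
Your proposal is correct and is essentially the paper's own proof: subtract the two systems in \eqref{eq:ITP}, identify the difference fields as a solution of the corner problem \eqref{eq:MaxwellCor} with sources $\im\omega(\mu-\mu_0)\mathbf{H}^t$ and $-\im\omega(\gamma-\varepsilon_0)\mathbf{E}^t$ (the paper drops the harmless constant factors $\pm\im\omega$ in \eqref{eq:rega4}), and invoke Theorem~\ref{thm:main1}. The two bookkeeping points you flag (the corner faces lying in $\partial\Omega$ and the match with the $C^\alpha$ hypothesis \eqref{eq:rega1}) are handled exactly as you describe.
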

	
	\begin{proof}
		
		By straightforward calculations, one can show by virtue of \eqref{eq:rega1} that $(\widehat{\mathbf{E}},\widehat{\mathbf{H}}):=(\mathbf{E}^t,\mathbf{H}^t)-(\mathbf{E}^0,\mathbf{H}^0)$ satisfies 
		\begin{equation}\label{eq:rega3}
			\left\{
			\begin{split}
				& \nabla\wedge  \widehat{\EM{E}}-\im\omega \Mp_0 \widehat{\EM{H}}=\EM{F}_{1} \quad \mbox{in $\Cor_{x_0}^{r_0}$},\\
				& \, \nabla\wedge \widehat{\EM{H}}+\im\omega\Ep_0 \widehat{\EM{E}}=\EM{F}_{2} \quad \mbox{in $\Cor_{x_0}^{r_0}$},\\
				&\, \nor\cros \widehat{\EM{E}}=\nor\cros\widehat{\EM{H}}=0 \hspace*{0.8cm}\mbox{on $\partial \Cor_{x_0}^{r_0}\cap\partial\Omega$ }, 
			\end{split}\right.
		\end{equation}
		where
		\begin{equation}\label{eq:rega4}
			\mathbf{F}_1=(\mu-\mu_0)\mathbf{H}^t\quad\mbox{and}\quad\mathbf{F}_2=(\gamma-\varepsilon_0)\mathbf{E}^t. 
		\end{equation}
		Hence, by Theorem~\ref{thm:main1} and \eqref{eq:rega1}, one readily has \eqref{eq:rega2}. 
	\end{proof}
	
	The study of the geometric structures of transmission eigenfunctions was initiated in \cite{BL2017b} 
	and then further developed in \cite{Bsource,BLLW,DCL}. However, in all of the aforementioned literature, the transmission eigenvalue problems are associated to the Helmholtz system that arises 
	from the time-harmonic acoustic scattering. The intrinsic geometric structure of the interior transmission eigenfunctions associated with the Maxwell system in Theorem~\ref{thm:main5} is the first one of its type in the literature. By assuming that $(\mu-\mu_0)(x_0)\neq 0$ and $(\gamma-\varepsilon_0)(x_0)\neq 0$, one readily has from \eqref{eq:rega2} that
	\begin{equation}\label{eq:rega5}
		\mathbf{E}^t(x_0)=\mathbf{H}^t(x_0)=0. 
	\end{equation}
	This vanishing property at the corner point is consistent with most of the existing results for the interior transmission eigenfunctions associated with the acoustic scattering. 
	
	We would like to make two remarks regarding the regularity assumption \eqref{eq:rega1}. First, it would be interesting to investigate that, under what conditions of the medium configuration $(\Omega;\gamma,\mu,\varepsilon_0,\mu_0)$ and the interior transmission eigenvalue $\omega\in\mathbb{R}_+$, the corresponding transmission eigenfunctions shall fulfill the regularity condition \eqref{eq:rega1}. Second, we firmly believe that the regularity condition \eqref{eq:rega1} is a technical limitation, and the vanishing property \eqref{eq:rega2} should hold in a much more general scenario. To overcome this issue, one should try to relax the regularity condition in Theorem~\ref{thm:main1} for \eqref{eq:MaxwellCor}. However, the relaxation is fraught with challenges and we choose to leave it for future study. Another promising way to address the above two issues is to conduct the numerical investigation which we shall report in a forthcoming paper. 
	
	Now, we are in a position to consider the practical implication of the geometric property in Theorem~\ref{thm:main5} to invisibility in wave scattering. We have
	
	\begin{thm}\label{thm:main6}
		Consider the electromagnetic scattering problem \eqref{eq:Maxwell2}. If the medium $(\Omega;\varepsilon,\mu,\sigma)$ possesses a corner $\mathcal{K}_{x_0}^{r_0}$ in its support and moreover, 
		\begin{equation}\label{eq:rega11}
			(\mu-\mu_0)\mathbf{H}^t,\ (\gamma-\varepsilon_0)\mathbf{E}^t\in C^\alpha(\overline{\mathcal{K}_{x_0}^{r_0}})^3,
		\end{equation} 
		for some $\alpha\in (0, 1)$ and
		\begin{equation}\label{eq:rega21}
			(\mu-\mu_0)(x_0)\mathbf{H}_0(x_0)\neq
			0\quad\mbox{or}\quad (\gamma-\varepsilon_0)(x_0)\mathbf{E}_0(x_0) \neq
			0,
		\end{equation}
		then the corresponding far-field pattern cannot be identically vanishing; that is, invisibility does not occur. 
	\end{thm}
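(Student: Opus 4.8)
The plan is a proof by contradiction that reduces everything to the vanishing already supplied by Theorem~\ref{thm:main5}. Assume invisibility, i.e.\ the far-field pattern of \eqref{eq:Maxwell2} is identically zero. By the discussion preceding Theorem~\ref{thm:main5}, Rellich's theorem then makes the scattered field vanish in the unbounded component of $\mathbb{R}^3\setminus\overline\Omega$, so that $\omega$ is an interior transmission eigenvalue and the total and incident fields $(\EM{E}^t,\EM{H}^t)$, $(\EM{E}^0,\EM{H}^0)$ solve \eqref{eq:ITP}. Since the regularity hypothesis \eqref{eq:rega11} coincides with \eqref{eq:rega1}, Theorem~\ref{thm:main5} applies and gives
\[
(\mu-\mu_0)(x_0)\,\EM{H}^t(x_0)=(\gamma-\varepsilon_0)(x_0)\,\EM{E}^t(x_0)=0 .
\]

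The remaining task is to pass from the total field to the incident field, because \eqref{eq:rega21} is phrased in terms of $(\EM{E}^0,\EM{H}^0)$. I would do this through the transmission conditions encoded in \eqref{eq:rega3}: the difference $(\widehat{\EM{E}},\widehat{\EM{H}})=(\EM{E}^t-\EM{E}^0,\EM{H}^t-\EM{H}^0)$ has vanishing tangential trace, $\nu\wedge\widehat{\EM{E}}=\nu\wedge\widehat{\EM{H}}=0$, on each lateral face of $\Cor_{x_0}^{r_0}$ meeting the apex. As $\Cor_{x_0}^{r_0}$ is a non-degenerate polyhedral corner ($n\ge 3$ edges), two of these faces carry non-parallel unit normals, and the only vector whose tangential parts vanish against two non-parallel normals is the zero vector. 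Hence, provided the apex values $\widehat{\EM{E}}(x_0),\widehat{\EM{H}}(x_0)$ are well defined, they vanish, so $\EM{E}^t(x_0)=\EM{E}^0(x_0)$ and $\EM{H}^t(x_0)=\EM{H}^0(x_0)$. Substituting into the displayed identities yields $(\mu-\mu_0)(x_0)\EM{H}^0(x_0)=(\gamma-\varepsilon_0)(x_0)\EM{E}^0(x_0)=0$, contradicting \eqref{eq:rega21}; invisibility is therefore impossible.

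The crux — and the step I expect to be hardest — is justifying that $\widehat{\EM{E}}$ and $\widehat{\EM{H}}$ admit well-defined boundary values at $x_0$, so that the face-wise tangential conditions may be combined pointwise. The incident field solves the constant-coefficient system \eqref{eq:Maxwellh} and is real-analytic, hence continuous; the genuine difficulty is the total field at the polyhedral apex, where electromagnetic fields are notorious for corner and edge singularities. I would control this using the H\"older data in \eqref{eq:rega11}: $(\widehat{\EM{E}},\widehat{\EM{H}})$ satisfies the \emph{constant}-coefficient Maxwell system \eqref{eq:rega3} with $C^\alpha$ right-hand sides $\EM{F}_1,\EM{F}_2$ and homogeneous tangential boundary data on the faces, and I would extract the required continuity at $x_0$ from Schauder-type estimates for the associated double-curl operator $\nabla\wedge\nabla\wedge\,\cdot\,-k^2(\,\cdot\,)$ together with the vanishing $\EM{F}_1(x_0)=\EM{F}_2(x_0)=0$. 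If instead \eqref{eq:rega21} is intended with the total field $(\EM{E}^t,\EM{H}^t)$ in place of the incident field, this last step is superfluous and the contradiction with Theorem~\ref{thm:main5} is immediate.
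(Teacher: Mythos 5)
Your route coincides with the paper's: assume the far field vanishes, use Rellich to reduce to the interior transmission eigenvalue problem \eqref{eq:ITP}, apply Theorem~\ref{thm:main5} under \eqref{eq:rega11}, and contradict \eqref{eq:rega21}. The paper's own proof is exactly this and nothing more; it simply asserts that the conclusion of Theorem~\ref{thm:main5}, which concerns the \emph{total} fields $(\mu-\mu_0)(x_0)\mathbf{H}^t(x_0)=(\gamma-\varepsilon_0)(x_0)\mathbf{E}^t(x_0)=0$, together with the remark that the scattered fields ``vanish at the boundary,'' contradicts \eqref{eq:rega21}, which is stated for the \emph{incident} fields. What you add --- and what the paper silently elides --- is the bridge between the two: your observation that the difference field $(\widehat{\EM{E}},\widehat{\EM{H}})$ in \eqref{eq:rega3} has vanishing tangential trace on at least two faces of $\Cor_{x_0}^{r_0}$ with non-parallel normals, and that a vector annihilated by $\nu\wedge(\cdot)$ for two independent $\nu$ must vanish, is the correct mechanism for concluding $\mathbf{E}^t(x_0)=\mathbf{E}^0(x_0)$ and $\mathbf{H}^t(x_0)=\mathbf{H}^0(x_0)$. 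You have also correctly located the one genuinely unresolved point: this argument needs the difference field to have a well-defined limit at the apex, and for an $\Hcurl$ solution at a polyhedral corner that is not automatic; your proposed Schauder-type control of the double-curl system with $C^\alpha$ right-hand side is plausible but only sketched. Be aware, however, that the paper supplies no substitute for this step either --- its two-line proof is, if anything, less complete than yours here --- so your proposal is structurally the paper's argument with the implicit total-versus-incident step made explicit, and the residual regularity gap is one you share with, rather than introduce beyond, the original.
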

	
	\begin{proof}
		This is a direct consequence of the fact that if the far-field pattern is identically vanishing, then one has both a) scattered wave fields $\EM{H},\EM{E}$ vanishing at the boundary, and b) the interior transmission eigenvalue problem \eqref{eq:ITP}. Hence one arrives at a contradiction by using the vanishing property in Theorem~\ref{thm:main5} and the non-vanishing condition in \eqref{eq:rega21}. 
	\end{proof}
	
	Theorem~\ref{thm:main6} essentially indicates that if the underlying scattering medium possesses a corner, then it radiates a nonzero scattering pattern unless the incident and hence total wave fields vanish at the corner. We would like to emphasize the local nature of such a non-invisibility result. That is, the assertion of non-invisibility mainly comes from the ``strong'' radiating nature of the corner which is independent of the other parts of the scatterer. This is also in consistence with the corresponding studies in the literature for the acoustic case \cite{BL2016,BPS,BV,DCL,PSV}. However in Maxwell scattering one does not have $H^2$- or $C^\alpha$-smoothness a-priori. Hence we would like to point out that the technical condition \eqref{eq:rega11} again restricts the more practical applicability of our result. Similar to our earlier remarks made after Theorem~\ref{thm:main5}, in order to overcome this issue, one should consider relaxing the regularity assumption in Theorem~\ref{thm:main1} for \eqref{eq:MaxwellCor}. Theorem~\ref{thm:main6} points out a promising direction for further investigation. We believe that the regularity condition \eqref{eq:rega11} should be relaxed to a much more general scenario. Nevertheless, we would also like to point out that in a recent paper \cite{LX}, 
	by following a different pathway, the result on corner always scattering was also proved under a very mild condition imposed on the incident wave field. But in \cite{LX}, the corner should be of degree $90^\circ$, whereas in Theorem~\ref{thm:main6}, the corner could a generic one as long as it is not degenerate to be $180^\circ$.
	
	Finally, we mention in passing about invisibility cloaking, which is a topic that has received significant attentions in the last decade, and is related to our discussion above. Our results says that cloaking devices cannot have corners. This is a huge topic and we choose not to give more discussions and only refer to the survey papers \cite{GKLU4,GKLU5,LiuUhl} and the references therein for more relevant studies in that direction.   
	
	Theorem~\ref{thm:main6} shows that if a medium scatterer possesses a corner, then it is detectable. Next, we show that the detectability also implies the identifiability, namely, the unique recovery of the inverse scattering problem \eqref{eq:ip2}.
	In fact, we have 
	\begin{thm}\label{thm:main7}
		Let $(\Omega; \gamma, \mu)$ and $(\Omega'; \gamma', \mu)$ be two medium scatterers and, $(\mathbf{E}_t, \mathbf{H}_t, \mathbf{E}_\infty)$ and $(\mathbf{E}'_t, \mathbf{H}_t', \mathbf{E}'_\infty)$ be the associated total and far fields. If $\mathbf{E}_\infty(\hat x)=\mathbf{E}_\infty'(\hat x)$ for all $\hat x\in\mathbb{S}^2$, then the set difference
		$\Omega\Delta\Omega'$ as defined in \eqref{eq:diff1}
		cannot contain a corner whose apex, say $x_0$, connects to infinity in the unbounded component of $\mathbb{R}^3\setminus\overline{\Omega\cup\Omega'}$, where \eqref{eq:cc11} and $\mathbf{J}_j,\mathbf{J}'_j\in C^\alpha(\mathcal{K}_{x_0}^{r_0})^3$, $j=1,2$, are satisfied with 
		\begin{equation}\label{eq:cc113}
			\mathbf{J}_1:=(\mu-\mu_0)\mathbf{H}_t,\ \mathbf{J}_2:=(\gamma-\varepsilon_0)\mathbf{E}_t;\ \ \mathbf{J}'_1:=(\mu'-\mu_0)\mathbf{H}_t',\ \mathbf{J}'_2:=(\gamma'-\varepsilon_0)\mathbf{E}_t'.
		\end{equation}
	\end{thm}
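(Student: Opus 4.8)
The plan is to run the same reductio-ad-absurdum scheme as in the proof of Theorem~\ref{thm:main3}, with the given current densities replaced by the \emph{effective sources} induced by the medium contrast. Assume for contradiction that $\mathbf{E}_\infty=\mathbf{E}_\infty'$ while $\Omega\Delta\Omega'$ contains a corner whose apex $x_0$ connects to infinity through the unbounded component $\mathbf{G}$ of $\mathbb{R}^3\setminus\overline{\Omega\cup\Omega'}$ and at which \eqref{eq:cc11} holds. Let $(\mathbf{E}_t,\mathbf{H}_t)$ and $(\mathbf{E}_t',\mathbf{H}_t')$ be the total fields of the two scatterers, probed by the same incident field. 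By the Rellich theorem and $\mathbf{E}_\infty=\mathbf{E}_\infty'$ the two scattered fields coincide in $\mathbf{G}$; since the incident field is common, this yields $(\mathbf{E}_t,\mathbf{H}_t)=(\mathbf{E}_t',\mathbf{H}_t')$ in $\mathbf{G}$.

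I would then localize at the corner. Without loss of generality take $x_0\in\overline{\Omega}\setminus\Omega'$ and choose $r_0$ so small that $\mathcal{K}_{x_0}^{r_0}\subset\overline{\Omega}\setminus\Omega'$, exactly as in Theorem~\ref{thm:main3}. On this cone $(\mathbf{E}_t',\mathbf{H}_t')$ solves the free Maxwell system, because the $\Omega'$-medium is absent there (so $\gamma'=\varepsilon_0$, $\mu'=\mu_0$), whereas $(\mathbf{E}_t,\mathbf{H}_t)$ solves \eqref{eq:Maxwell2} with the contrast medium. Using $\gamma=\varepsilon+\im\sigma/\omega$, so that $\im\omega\varepsilon-\sigma=\im\omega\gamma$, I rewrite the latter as the background system plus a source. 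For the difference $(\widetilde{\mathbf{E}},\widetilde{\mathbf{H}}):=(\mathbf{E}_t-\mathbf{E}_t',\,\mathbf{H}_t-\mathbf{H}_t')$ this produces, in $\mathcal{K}_{x_0}^{r_0}$,
\begin{equation*}
\nabla\wedge\widetilde{\mathbf{E}}-\im\omega\mu_0\widetilde{\mathbf{H}}=\im\omega\mathbf{J}_1,\qquad
\nabla\wedge\widetilde{\mathbf{H}}+\im\omega\varepsilon_0\widetilde{\mathbf{E}}=-\im\omega\mathbf{J}_2,
\end{equation*}
with $\mathbf{J}_1,\mathbf{J}_2$ as in \eqref{eq:cc113}; the standing hypothesis $\mathbf{J}_j\in C^\alpha(\mathcal{K}_{x_0}^{r_0})^3$ ensures these right-hand sides meet the regularity demanded by Theorem~\ref{thm:main1}.

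It then remains to recover the homogeneous tangential conditions on the faces of the cone lying on $\partial\Omega$. Since $\widetilde{\mathbf{E}},\widetilde{\mathbf{H}}\in H_{\mathrm{loc}}(\mathrm{curl},\mathbb{R}^3)$, their tangential traces are continuous across the Lipschitz interface $\partial\mathcal{K}_{x_0}^{r_0}\cap\partial\mathbf{G}$; as $(\widetilde{\mathbf{E}},\widetilde{\mathbf{H}})\equiv 0$ on the $\mathbf{G}$-side, this forces $\nor\wedge\widetilde{\mathbf{E}}=\nor\wedge\widetilde{\mathbf{H}}=0$ there. Thus $(\widetilde{\mathbf{E}},\widetilde{\mathbf{H}})$ with sources $\im\omega\mathbf{J}_1,-\im\omega\mathbf{J}_2$ fulfils precisely the hypotheses of Theorem~\ref{thm:main1}, whence $\mathbf{J}_1(x_0)=\mathbf{J}_2(x_0)=0$, i.e. $(\mu-\mu_0)(x_0)\mathbf{H}_t(x_0)=(\gamma-\varepsilon_0)(x_0)\mathbf{E}_t(x_0)=0$, contradicting \eqref{eq:cc11}. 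The symmetric case $x_0\in\overline{\Omega'}\setminus\Omega$ follows by exchanging the roles of the two scatterers and of $(\mathbf{J}_1,\mathbf{J}_2)$, $(\mathbf{J}_1',\mathbf{J}_2')$. I expect the argument proper to be routine given Theorem~\ref{thm:main1}; the genuine obstacle, just as for Theorem~\ref{thm:main5}, is the a~priori $C^\alpha$-regularity of the effective sources, which here is assumed outright, so that the only nontrivial step is the justification of the trace-continuity reduction across $\partial\mathbf{G}$.
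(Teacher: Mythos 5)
Your proposal is correct and follows exactly the route the paper intends: the paper's own proof is a one-line pointer to combining the reductio-ad-absurdum localization of Theorem~\ref{thm:main3} with the medium-to-source reduction of Theorem~\ref{thm:main5}, which is precisely what you carry out (the extra constant factors $\im\omega$, $-\im\omega$ in your effective sources are harmless for applying Theorem~\ref{thm:main1}). Your explicit justification of the homogeneous tangential conditions via trace continuity of $H(\mathrm{curl})$ fields across $\partial\mathcal{K}_{x_0}^{r_0}\cap\partial\mathbf{G}$ is a detail the paper leaves implicit, and it is the right one.
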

	\begin{proof}
		The proof follows from a similar argument as that of Theorem~\ref{thm:main3} along with the use of the same reduction strategy in Theorem~\ref{thm:main5} in transforming the medium scattering problem to a source scattering problem.
	\end{proof}
	
	Similarly as before, the 
	regularity assumptions in Theorem~\ref{thm:main7} limit the practical applicability of the local unique recovery result in Theorem~\ref{thm:main7}. Instead of exploring under what conditions the regularity assumption can be fulfilled, we leave this issue for our future study, in particular, on relaxing the regularity condition in Theorem~\ref{thm:main1}.

	\section*{Acknowledgment}
	
	The work of H Liu was supported by the startup fund and FRG grants from Hong Kong Baptist University and the Hong Kong RGC grants (projects 12302017 and 12302018).

\end{document}